
\documentclass[reqno,a4paper,12pt]{amsart}

\usepackage[all,poly]{xy}
\usepackage{amsfonts,stmaryrd}
\usepackage[mathcal]{eucal}
\usepackage{amssymb}
\usepackage{amsmath}
\usepackage{mathrsfs}
\usepackage{enumerate}

\theoremstyle{plain}

\newtheorem {The}{Theorem}
\newtheorem{oldtheorem}{Theorem}

\newtheorem{lemma}{Lemma}
\newtheorem{problem}{Problem}

\setlength{\textwidth}{152truemm}
\setlength{\oddsidemargin}{2pt}
\setlength{\evensidemargin}{2pt}
\setlength{\textheight}{215truemm}


\def\map{\longrightarrow}

\def\GL{\operatorname{GL}}

\def\SL{\operatorname{SL}}

\def\EE{\operatorname{EE}}

\def\K{\operatorname{K}}

\def\unlhd{\trianglelefteq}

\def\map{\longrightarrow}










\def\map{\longrightarrow}

\def\epsilon{\varepsilon}

\def\pamod#1{\,(\operatorname{mod}{\, #1})\,}






\def\map{\longrightarrow}

\def\K{\operatorname{K}}

\def\SL{\operatorname{SL}}
\def\GL{\operatorname{GL}}

\long\def\forget#1\forgotten{}


\title[relative and unrelative elementary groups,
 revisited]{commutators of relative and unrelative\\ elementary groups,
 revisited}

\author{N.~Vavilov}
\address{Department of Mathematics and Mechanics,
St.~Petersburg State University, St.~Petersburg, Russia}
\email{nikolai-vavilov@yandex.ru}
\thanks{The work of the first author was supported by the
Russian Science Foundation grant 17-11-01261.}
\author{Z.~Zhang}
\address{Department of  Mathematics, Beijing Institute
of Technology, Beijing, China}
\email{zuhong@hotmail.com}

\keywords{General linear groups, elementary subgroups, congruence
subgroups, standard commutator formula, unrelativised commutator formula, elementary generators}

\begin{document}

\begin{abstract}
Let $R$ be any associative ring with $1$, $n\ge 3$, and let $A,B$ 
be two-sided ideals of $R$. In the present paper we show that 
the mixed commutator subgroup $[E(n,R,A),E(n,R,B)]$ is generated 
as a group by the elements of the two following forms: 1)
$z_{ij}(ab,c)$ and $z_{ij}(ba,c)$, 2) $[t_{ij}(a),t_{ji}(b)]$,
where $1\le i\neq j\le n$, $a\in A$, $b\in B$, $c\in R$. Moreover,
for the second type of generators, it suffices to fix one pair of
indices $(i,j)$. This result is both stronger and more general than the previous results by Roozbeh Hazrat and the authors. In particular,
it implies that for all associative rings one has the equality  
$\big[E(n,R,A),E(n,R,B)\big]=\big[E(n,A),E(n,B)\big]$ and many
further corollaries can be derived for rings subject to commutativity
conditions.
\end{abstract}

\maketitle

\maketitle
\hangindent 6cm\hangafter=0\noindent
To the remarkable St~Petersburg algebraist\\ Alexander Generalov
\par\hangindent 6cm\hangafter=0\noindent
\bigskip

\section{Introduction}

In the present note we generalize and strengthen the results 
by Roozbeh Hazrat and the authors \cite{Hazrat_Zhang_multiple, Hazrat_Vavilov_Zhang, NV18} on generation of mutual 
commutator subgroups of relative and unrelative elementary 
subgroups in the general linear group. Namely, we both 
dramatically reduce the sets of generators that occur therein 
and either seriously weaken, or completely remove commutativity conditions.

Let $R$ be an associative ring with 1, and $\GL(n, R)$ be the general linear group of degree $n\ge 3$ over $R$. As
usual, $e$ denotes the identity matrix, whereas $e_{ij}$ denotes a standard matrix unit. For $c\in R$ and $1\le i\neq j\le n$,
we denote by $t_{ij}(c) = e +ce_{ij}$ the corresponding elementary transvection. To an ideal $A\unlhd R$, we assign the
elementary subgroup
$$ E(n,A)=\big\langle t_{ij}(a),\ a\in A,\ 1\le i\neq j\le n\big\rangle. $$
\par
The corresponding relative elementary subgroup $E(n,R,A)$ is defined as the normal closure of $E(n,A)$ in the absolute elementary subgroup $E(n,R)$. From the work of Michael Stein, Jacques Tits, and Leonid
Vaserstein it is classically known that {\it as a group\/} $E(n,R,A)$
is generated by $z_{ij}(a,c)=t_{ji}(c)t_{ij}(a)t_{ji}(-c)$, where
$1\le i\neq j\le n$, $a\in A$, $c\in R$.

Further, consider the reduction homomorphism $\rho_I:\GL(n,R)\map\GL(n,R/I)$ modulo $I$. By definition, the principal congruence subgroup $\GL(n,I)=\GL(n,R,I)$ is the kernel of $\rho_I$. In other words, $\GL(n,I)$ consists of all matrices $g$ congruent to $e$ 
modulo $I$.

A first version of following result was discovered (in a slightly
less precise form) by Roozbeh Hazrat and the second author, see \cite{Hazrat_Zhang_multiple}, Lemma 12. In exactly this form 
it is stated in our paper \cite{Hazrat_Vavilov_Zhang}, Theorem~3A.

\begin{oldtheorem}
Let $R$ be a quasi-finite ring with $1$, let $n\ge 3$, and let $A,B$ 
be two-sided ideals of $R$. Then the mixed commutator subgroup 
$[E(n,R,A),E(n,R,B)]$ is generated as a group by the elements of the form
\par\smallskip
$\bullet$ $z_{ij}(ab,c)$ and $z_{ij}(ba,c)$,
\par\smallskip
$\bullet$ $[t_{ij}(a),t_{ji}(b)]$,
\par\smallskip
$\bullet$ $[t_{ij}(a),z_{ij}(b,c)]$,
\par\smallskip\noindent
where $1\le i\neq j\le n$, $a\in A$, $b\in B$, $c\in R$.
\end{oldtheorem}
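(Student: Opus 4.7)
The plan is to let $H$ denote the subgroup of $\E(n,R)$ generated by the three listed families of elements. The inclusion $H\subseteq [\E(n,R,A),\E(n,R,B)]$ is immediate because each listed generator is visibly a commutator of an element of $\E(n,R,A)$ with an element of $\E(n,R,B)$. For the reverse inclusion I would exploit the Stein--Tits--Vaserstein description recalled in the introduction: $\E(n,R,A)$ is generated as a group by the $z_{ij}(a,c)$, and similarly for $\E(n,R,B)$. Combined with the standard commutator expansions $[xy,z]=\lsup{x}[y,z]\cdot [x,z]$ and $[x,yz]=[x,y]\cdot \lsup{y}[x,z]$, this reduces the task to two sub-claims: \emph{(i)} every elementary mixed commutator $[z_{ij}(a,c),z_{hk}(b,d)]$ lies in $H$; and \emph{(ii)} $H$ is stable under conjugation by $\E(n,R,A)$ and by $\E(n,R,B)$.

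For sub-claim (i) I would run a case analysis on the mutual position of the index pairs $\{i,j\}$ and $\{h,k\}$. Expanding $z_{ij}(a,c)=t_{ji}(c)t_{ij}(a)t_{ji}(-c)$ and the analogous factorisation for $z_{hk}(b,d)$, the Chevalley commutator formulas $[t_{ij}(a),t_{jk}(b)]=t_{ik}(ab)$ for $i\neq k$ and $[t_{ij}(a),t_{hk}(b)]=1$ for disjoint indices collapse the bulk of the cases into products of expressions of the form $z_{pq}(ab,\ast)$ and $z_{pq}(ba,\ast)$, which are precisely the type~1 generators. The residual cases, in which the pairs $\{i,j\}$ and $\{h,k\}$ coincide, are exactly those where the long-root commutators $[t_{ij}(a),t_{ji}(b)]$ (type~2) and the mixed expressions $[t_{ij}(a),z_{ij}(b,c)]$ (type~3) appear as the irreducible residue that cannot be flattened further by local commutator identities alone.

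Sub-claim (ii) is where the quasi-finiteness hypothesis enters in an essential way. I would follow the Bak--Hazrat--Vavilov localisation scheme: since $R$ is quasi-finite, one first descends to a finitely generated (hence Noetherian) subring carrying all the data of a given commutator expression, and then localises at each maximal ideal of the centre. In the resulting semilocal situation, conjugates $\lsup{t_{ij}(c)}h$ of a listed generator $h$ can be rewritten as products of listed generators by explicit matrix manipulations, after which a patching argument over the local charts transports the identity back to the global ring $R$.

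The main obstacle will be sub-claim (ii): organising the explicit rewriting of $\lsup{t_{ij}(c)}h$ when $h$ is of type~1 or type~3 requires controlling several layers of nested commutators, and it is here that the type~3 family $[t_{ij}(a),z_{ij}(b,c)]$ turns out to be unavoidable in the present argument. Simultaneously dispensing with that family and with the quasi-finiteness assumption, as the main theorem of this paper announces, will demand a genuinely different rewriting technique rather than an incremental sharpening of the localisation recipe.
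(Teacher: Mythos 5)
Your overall reduction (show the three families generate a subgroup $H$ containing all elementary mixed commutators and closed under conjugation by $E(n,R,A)$ and $E(n,R,B)$) is a legitimate framework, but both sub-claims conceal the actual substance of the theorem and, as sketched, have genuine gaps. In sub-claim (i), the coincident-pair cases do not ``collapse'': a commutator such as $[z_{ij}(a,c),z_{ji}(b,d)]$ or $[z_{ij}(a,c),z_{ij}(b,d)]$ with two independent parameters $c,d\in R$ is \emph{not} among the listed generators --- types 2 and 3 have a bare $t_{ij}(a)$, i.e.\ $c=0$, in one slot --- and the Steinberg relations alone do not rewrite it as a product of listed generators. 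Reducing such two-parameter commutators to the listed ones is precisely the conjugation-stripping that constitutes the theorem; treating it as a routine residue of a case analysis is where your argument would stall.

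Sub-claim (ii) is also not provable the way you propose. ``Being a product of the listed generators'' is not a local condition: an identity established in a localisation $R_s$ (or in a semilocal quotient) does not pull back to a generator decomposition over $R$, and the patching step you invoke is exactly what fails without the heavy conjugation-calculus machinery. The route actually used for this statement is different: first one proves, over an \emph{arbitrary} associative ring and by pure commutator identities, that $[E(n,R,A),E(n,R,B)]$ is generated by the $E(n,R)$-conjugates ${}^x z_{ij}(ab,c)$, ${}^x z_{ij}(ba,c)$, ${}^x[t_{ij}(a),t_{ji}(b)]$, ${}^x[t_{ij}(a),z_{ij}(b,c)]$ (the paper's Lemma~2, quoted from the earlier work). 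Conjugates of the first type are absorbed by $E(n,R,AB+BA)$, which by Stein--Tits--Vaserstein (Lemma~1) is generated by the type-1 elements. For types 2 and 3 a level calculation puts them in $\GL(n,R,AB+BA)$, so ${}^xg$ differs from $g$ by an element of $[\GL(n,R,AB+BA),E(n,R)]$, and it is here --- and only here --- that quasi-finiteness enters, through the standard commutator formula (Theorem~B, whose proof is where localisation lives), giving $[\GL(n,R,AB+BA),E(n,R)]\le E(n,R,AB+BA)$. Your proposal, by contrast, tries to localise the generation statement itself, which is not how the hypothesis can be exploited; note also that the present paper's point is that the stripping for types 2 and 3 can in fact be done by direct elementary computations (its Lemmas 3--5), with no localisation and no quasi-finiteness at all.
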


In the present paper, we prove the following result, which
is both {\it terribly\/} much stronger, and much more general 
than Theorem~A and which completely solves \cite{Hazrat_Vavilov_Zhang}, Problem 1, for 
the case of $\GL_n$.

\begin{The}
Let $R$ be any associative ring with $1$, let $n\ge 3$, and let $A,B$ 
be two-sided ideals of $R$. Then the mixed commutator subgroup 
$[E(n,R,A),E(n,R,B)]$ is generated as a group by the elements of the form
\par\smallskip
$\bullet$ $z_{ij}(ab,c)$ and $z_{ij}(ba,c)$,
\par\smallskip
$\bullet$ $[t_{ij}(a),t_{ji}(b)]$,
\par\smallskip\noindent
where $1\le i\neq j\le n$, $a\in A$, $b\in B$, $c\in R$. Moreover,
for the second type of generators, it suffices to fix one pair of
indices $(i,j)$.
\end{The}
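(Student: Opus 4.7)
Let $H$ denote the subgroup of $\GL(n,R)$ generated by the two proposed families. The inclusion $H \subseteq [E(n,R,A),E(n,R,B)]$ is immediate: for $z_{ij}(ab,c)$, choose a third index $k$ (available since $n\ge 3$), write $t_{ij}(ab)=[t_{ik}(a),t_{kj}(b)]$ by the Chevalley commutator formula, and conjugate by $t_{ji}(c)$ to realise $z_{ij}(ab,c)=[{}^{t_{ji}(c)}t_{ik}(a),\,{}^{t_{ji}(c)}t_{kj}(b)]$, a commutator of elements of $E(n,R,A)$ and $E(n,R,B)$; the other generators are evidently of the same form.

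For the reverse inclusion, I would exploit the Stein--Tits--Vaserstein theorem that $E(n,R,A)$ is generated as a group by the $z_{ij}(a,c)$, together with the standard commutator identities $[xy,z]={}^x[y,z]\cdot[x,z]$ and $[x,yz]=[x,y]\cdot{}^y[x,z]$. These show that $[E(n,R,A),E(n,R,B)]$ is generated as a group by the elementary commutators $[z_{ij}(a,c),z_{kl}(b,d)]$ together with their conjugates under $E(n,R,A)\cdot E(n,R,B)$. Provided $H$ is normal in $E(n,R)$, it thus suffices to verify that each $[z_{ij}(a,c),z_{kl}(b,d)]$ lies in $H$. Peeling off the outer conjugations by $t_{ji}(c)$ and $t_{lk}(d)$ via commutator calculus and doing a case analysis on the relative position of the roots $(i,j)$ and $(k,l)$ reduces the task to the sub-case $c=d=0$: by the Chevalley formula, $[t_{ij}(a),t_{kl}(b)]$ is either trivial, or equal to some $t_{pq}(ab)$ or $t_{pq}(\pm ba)$ (type (1) generators with $c=0$), or of opposite-root form $[t_{ij}(a),t_{ji}(b)]$. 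For this last form I would apply the Hall--Witt identity to the triple $t_{ik}(a),\,t_{kj}(1),\,t_{ji}(b)$ (using $t_{ij}(a)=[t_{ik}(a),t_{kj}(1)]$): this expresses $[t_{ij}(a),t_{ji}(b)]$ modulo $E(n,R,AB+BA)\subseteq H$ as a conjugate of the opposite-root commutator for the pair $(i,k)$ or $(k,i)$, and iterating through different choices of $k$ propagates the containment from the single fixed pair $(1,2)$ to all pairs.

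The remaining and main obstacle is to prove that $H$ is normal in $E(n,R)$. The subgroup generated by the type (1) generators coincides with $E(n,R,AB+BA)$, which is normal by general principles, so the work concentrates on showing that $t_{pq}(r)\,[t_{ij}(a),t_{ji}(b)]\,t_{pq}(-r)\in H$ for every $p\ne q$ and $r\in R$. When $(p,q)$ is disjoint from or parallel to $(i,j)$, a direct Chevalley calculation suffices, the conjugate differing from the original by a product of type (1) generators. The critical case $(p,q)\in\{(i,j),(j,i)\}$ yields $[t_{ij}(a),z_{ji}(b,r)]$, which is precisely the extra type (3) generator of Theorem~A that we aim to eliminate. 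To absorb this into $H$, I would decompose $z_{ji}(b,r)=[t_{ij}(r),t_{ji}(b)]\cdot t_{ji}(b)$ and expand via $[x,yz]=[x,y]\cdot{}^y[x,z]$, reducing matters to proving that the iterated commutator $[t_{ij}(a),[t_{ij}(r),t_{ji}(b)]]$ lies in $H$. This I would in turn attack by a further Hall--Witt expansion involving the third index $k$, peeling off pieces that land in $E(n,R,AB+BA)$. This peeling step is exactly where the quasi-finiteness hypothesis entered the proof of Theorem~A; the crux of the present improvement must be an arithmetic-free completion of it that works over any associative ring.
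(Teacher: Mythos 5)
Your framework (define $H$, prove one inclusion directly, reduce the other to normality of $H$ plus membership of elementary commutators, and move between index pairs by Hall--Witt-type manipulations) matches the overall architecture of the paper, and your index-propagation sketch is close in spirit to the paper's Lemma~5. But the proposal has a genuine gap precisely at the point you yourself flag at the end: the critical conjugation case. You reduce it to showing that $[t_{ij}(a),z_{ji}(b,r)]$, equivalently the iterated commutator $[t_{ij}(a),[t_{ij}(r),t_{ji}(b)]]$, lies in $H$, and then say you ``would attack this by a further Hall--Witt expansion, peeling off pieces that land in $E(n,R,AB+BA)$,'' conceding that the crux ``must be an arithmetic-free completion'' of the old argument. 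That crux is exactly the new content of the paper and it is not supplied in your proposal. The paper's Lemma~4 does it by a concrete identity: write $t_{ij}(-a)=[t_{ih}(1),t_{hj}(-a)]$ with $h\neq i,j$, conjugate by $z_{ij}(b,c)$, and compute explicitly that the resulting arguments differ from $t_{ih}(1)$ and $t_{hj}(-a)$ by factors $u=t_{jh}(-cbc)t_{ih}(-bc)\in E(n,B)$ and $v\in E(n,AB)$, which after one more expansion leaves, modulo $E(n,R,AB+BA)$, an elementary conjugate of the opposite-root commutator $[t_{jh}(-cbc),t_{hj}(-a)]$. Without this (or an equivalent) computation your plan does not close.

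Two further points. First, your reduction of $[z_{ij}(a,c),z_{kl}(b,d)]$ ``to the sub-case $c=d=0$'' by peeling off conjugations is not routine: the residual opposite-root case with nontrivial $c,d$ is exactly where the type~(3) generators $[t_{ij}(a),z_{ij}(b,c)]$ of Theorem~A arise, and this case analysis is the content of the nontrivial Lemma~2 (Hazrat--Zhang), which the paper cites rather than re-proves; also, quasi-finiteness entered Theorem~A not in that case analysis but in the removal of the outer conjugating element $x$ via Theorem~B. Second, your normality argument has a latent circularity: expanding $[t_{ij}(a),z_{ji}(b,r)]$ via $[x,yz]=[x,y]\cdot{}^y[x,z]$ produces the factor ${}^{[t_{ij}(r),t_{ji}(b)]}[t_{ij}(a),t_{ji}(b)]$, whose membership in $H$ is an instance of the very normality you are proving, with a conjugator built from the same critical roots. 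The paper sidesteps this by working throughout modulo $E(n,R,AB+BA)$ and inducting on the length of the conjugating word (Lemma~3), handling conjugation by $t_{ij}(c)$ or $t_{ji}(c)$ by first rewriting these as commutators $[t_{ih}(c),t_{hj}(1)]$ of transvections in already-treated positions. You would need to build this induction (or some substitute) into your argument for it to be non-circular.
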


Let us briefly review the sequence of events that led us to 
this result. It all started a decade ago with our joint papers
with Alexei Stepanov and Roozbeh Hazrat  \cite{Vavilov_Stepanov_standard, Hazrat_Zhang, Vavilov_Stepanov_revisited}, which, in particular, gave {\it  three\/} 
completely different proofs of the following birelative standard commutator formula, under various commutativity conditions\footnote{The third of these proofs was essentially reduction to the absolute case via level calculations, as discovered earlier by Hong You \cite{HongYou}, of which we were not aware at the time of writing
these papers.}. 
In turn, this formula generalised a great 
number of preceding results due to Hyman Bass, Alec Mason and
Wilson Stothers, Andrei Suslin, Leonid Vaserstein, Zenon Borewicz 
and the first author, and many others, see, for instance
\cite{Bass_stable, Mason_Stothers, Mason_commutators_2,
Suslin, Vaserstein_normal,borvav}, and a complete bibliography
of early papers in 
\cite{Hahn_OMeara, Vavilov_Stepanov_1, Hazrat_Vavilov_Bak65}.
Compare also \cite{yoga-1, Porto-Cesareo, yoga-2, Hazrat_Vavilov_Zhang} for a detailed description of the recent work
in the area.

\begin{oldtheorem}
Let $R$ be a quasi-finite ring with $1$, let $n\ge 3$, and let $A,B$ 
be two-sided ideals of $R$. Then the following birelative standard commutator formula holds
$$ \big[E(n,R,A),\GL(n, R, B)\big]=\big[E(n,R,A),E(n,R,B)\big]. $$
\end{oldtheorem}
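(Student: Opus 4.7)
The inclusion $[\E(n,R,A),\E(n,R,B)]\subseteq[\E(n,R,A),\GL(n,R,B)]$ is trivial since $\E(n,R,B)\subseteq\GL(n,R,B)$. For the reverse inclusion, the plan is to sandwich both groups between the relative elementary subgroup $\E(n,R,AB+BA)$ of the product ideal.

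\emph{Lower inclusion: $\E(n,R,AB+BA)\subseteq[\E(n,R,A),\E(n,R,B)]$.} For any $i\ne j$ and any third index $k$ (available because $n\ge 3$), the Chevalley commutation rule gives $t_{ij}(ab)=[t_{ik}(a),t_{kj}(b)]$ and $t_{ij}(ba)=[t_{ik}(b),t_{kj}(a)]$. For $a\in A$, $b\in B$ these place $t_{ij}(ab),t_{ij}(ba)\in[\E(n,A),\E(n,B)]\subseteq[\E(n,R,A),\E(n,R,B)]$. By additivity of $t_{ij}$ in its entry, every $t_{ij}(d)$ with $d\in AB+BA$ lies in this commutator. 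Since both $\E(n,R,A)$ and $\E(n,R,B)$ are normal in $\GL(n,R)$, their commutator is normalized by $\E(n,R)$, and hence also contains all conjugates $z_{ij}(d,c)$. These generate $\E(n,R,AB+BA)$, establishing the claim.

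\emph{Upper inclusion: $[\E(n,R,A),\GL(n,R,B)]\subseteq\E(n,R,AB+BA)$.} This is the birelative standard commutator formula for quasi-finite rings, established (with three different proofs) in \cite{Vavilov_Stepanov_standard, Hazrat_Zhang, Vavilov_Stepanov_revisited}. Its derivation begins with the routine matrix-entry computation $[\GL(n,R,A),\GL(n,R,B)]\subseteq\GL(n,R,AB+BA)$, refines this using the absolute standard commutator formula $[\E(n,R),\GL(n,R,B)]\subseteq\E(n,R,B)$ (which places $[x,g]$ inside $\E(n,R,B)$ whenever $x\in\E(n,R,A)$), and finally deploys quasi-finiteness to bridge $\GL$ and $\E$ at the level $AB+BA$ via a local--global reduction to Noetherian subrings of finite stable rank. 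Chaining the two sandwiches gives
$[\E(n,R,A),\GL(n,R,B)]\subseteq\E(n,R,AB+BA)\subseteq[\E(n,R,A),\E(n,R,B)]$, which together with the trivial opposite inclusion yields the equality of Theorem~B.

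\emph{Main obstacle.} The substantive difficulty lies entirely in the upper inclusion, namely in promoting the containment from $\GL(n,R,AB+BA)$ to $\E(n,R,AB+BA)$. This is exactly where quasi-finiteness is indispensable, for without it the relative $\K_1$ presents a genuine obstruction; the local--global principle reduces the problem to finitely generated subrings, where the classical stability theorems of Bass, Suslin, and Vaserstein close the gap. The lower inclusion, by contrast, is essentially a commutation-rule bookkeeping argument and imposes no condition on $R$ beyond $n\ge 3$.
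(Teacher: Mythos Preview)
Your sandwich argument has a fatal gap: the upper inclusion $[\E(n,R,A),\GL(n,R,B)]\subseteq\E(n,R,AB+BA)$ is false in general, even for very nice rings. Were it true, then in particular $[\E(n,R,A),\E(n,R,B)]\subseteq\E(n,R,AB+BA)$, and combined with your (correct) lower inclusion this would force the equality $[\E(n,R,A),\E(n,R,B)]=\E(n,R,AB+BA)$. But that directly contradicts Theorem~1 of the present paper, whose whole content is that $[\E(n,R,A),\E(n,R,B)]$ requires, in addition to $\E(n,R,AB+BA)$, the generators $[t_{ij}(a),t_{ji}(b)]$. These commutators sit in $\GL(n,R,AB+BA)$ by the level calculation, but \emph{not} in $\E(n,R,AB+BA)$ in general; the paper devotes Lemma~6, Theorem~5, and Problem~1 precisely to the nontrivial quotient $[\E(n,A),\E(n,B)]/\E(n,R,AB+BA)$.

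The slip is in the phrase ``deploys quasi-finiteness to bridge $\GL$ and $\E$ at the level $AB+BA$''. Quasi-finiteness does no such thing: the relative $\K_1$ obstruction $\GL(n,R,I)/\E(n,R,I)$ is generically nonzero for quasi-finite (even commutative Noetherian) rings, so containment in $\GL(n,R,AB+BA)$ cannot be promoted to containment in $\E(n,R,AB+BA)$. You have also misidentified what the references \cite{Vavilov_Stepanov_standard, Hazrat_Zhang, Vavilov_Stepanov_revisited} prove: they establish exactly Theorem~B, namely $[\E(n,R,A),\GL(n,R,B)]=[\E(n,R,A),\E(n,R,B)]$, not your stronger (and false) upper inclusion. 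Their proofs proceed by localisation--completion, by relative conjugation calculus, or by reduction to the absolute formula via level computations, and in each case the target is $[\E(n,R,A),\E(n,R,B)]$ directly, never the strictly smaller $\E(n,R,AB+BA)$. Note finally that the present paper does not itself prove Theorem~B; it is quoted as a known result from that earlier literature.
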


The condition in the above theorem is very general and embraces 
very broad class of rings, but some commutativity condition is 
necessary here, since it is known that the standard commutator 
formula may fail for general associative rings even in the absolute 
case, see \cite{Gerasimov}.

Last year the first author noticed that for {\it commutative\/}
rings an argument from his paper with Alexei Stepanov \cite{Stepanov_Vavilov_decomposition} implies that the standard commutator formula holds also in the following unrelativised 
form, see \cite{NV18}, Theorem~1.

\begin{oldtheorem}
Let $R$ be a commutative ring with $1$, let $n\ge 3$, and let $A,B$ 
be two-sided ideals of $R$. Then the following commutator formula 
holds
$$ \big[E(n,A),\GL(n, R, B)\big]=\big[E(n,A),E(n,B)\big]. $$
\end{oldtheorem}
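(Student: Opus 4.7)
The inclusion $\supseteq$ is immediate since $E(n,B)\subseteq\GL(n,R,B)$. For the reverse inclusion, my plan is to adapt the ``decomposition of unipotents'' method developed by Stepanov and the first author in \cite{Stepanov_Vavilov_decomposition}, exploiting commutativity of $R$ in an essential way.

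\textbf{Reduction to elementary generators.} The group $[E(n,A),\GL(n,R,B)]$ is generated as a subgroup by commutators $[x,g]$ with $x\in E(n,A)$ and $g\in\GL(n,R,B)$, and any such $x$ is a product of transvections $t_{ij}(a)$, $a\in A$. Using the identity $[xy,g]={}^{y}[x,g]\cdot[y,g]$ and inducting on the word length of $x$, it suffices to show $[t_{ij}(a),g]\in[E(n,A),E(n,B)]$ for every $a\in A$, every $g\in\GL(n,R,B)$ and every $i\neq j$, provided I first verify that $[E(n,A),E(n,B)]$ is normalized by $E(n,A)$. For the latter I would inspect the Chevalley commutator relations case by case; only the Weyl-type index pattern $(i=l,\,j=k)$ poses a real problem, and there a third-index factorization (available since $n\ge 3$) shows that the diagonal correction produced by conjugating $t_{kl}(b)\in E(n,B)$ by $t_{ij}(a)\in E(n,A)$ still lies inside $[E(n,A),E(n,B)]$.

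\textbf{Core estimate.} For $a\in A$ and $g\in\GL(n,R,B)$ I would write $g\,t_{ij}(a)\,g^{-1}=e+a\cdot u\,v$, where $u=g\,e_{i}$ is the $i$-th column of $g$ and $v=e_{j}^{\top}g^{-1}$ is the $j$-th row of $g^{-1}$; pulling $a$ out of the product uses commutativity of $R$. Since $g\equiv e\pmod{B}$, writing $u=e_{i}+u'$ and $v=e_{j}^{\top}+v'$ gives $u',v'$ with all components in $B$, so every entry of $g\,t_{ij}(a)\,g^{-1}\,t_{ij}(-a)-e$ lies in $AB$. Invoking the rank-one decomposition of \cite{Stepanov_Vavilov_decomposition}, valid for $n\ge 3$ and ultimately relying on $v\,u=\delta_{ij}=0$, I would factor $[g,t_{ij}(a)]=g\,t_{ij}(a)\,g^{-1}\,t_{ij}(-a)$ as an explicit product of elementary transvections $t_{pq}(a\,\eta_{pq})$ with $\eta_{pq}\in B$. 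Each such factor is then rewritten via the Chevalley identity $[t_{pm}(a),t_{mq}(\eta_{pq})]=t_{pq}(a\,\eta_{pq})$ with $m\notin\{p,q\}$ (again using $n\ge 3$) as a manifest element of $[E(n,A),E(n,B)]$. Combined with the reduction step, this gives the desired inclusion.

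\textbf{Principal obstacle.} The hardest part will be controlling the unipotent decomposition of $g\,t_{ij}(a)\,g^{-1}\,t_{ij}(-a)$ so that every resulting transvection has an argument expressible as a single commutative product $a\,\eta$ with $a\in A$, $\eta\in B$. This is where commutativity of $R$ enters decisively, since it lets one move $a$ freely past entries of $g$ and $g^{-1}$; in the non-commutative setting these factors would be intertwined as $\eta_{1}\,a\,\eta_{2}$, obstructing the direct reduction to $[E(n,A),E(n,B)]$ through the Chevalley formula and explaining why the present statement is confined to commutative rings.
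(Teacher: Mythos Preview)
Your overall strategy --- reduce to single transvections and then invoke the rank-one decomposition of \cite{Stepanov_Vavilov_decomposition} --- is indeed the route that \cite{NV18} took to Theorem~C. Note, however, that the present paper does \emph{not} reprove Theorem~C this way. Here Theorem~C is recovered as the commutative special case of Theorem~3, and Theorem~3 is obtained by combining the known birelative formula (Theorem~B) with the new equality $[E(n,R,A),E(n,R,B)]=[E(n,A),E(n,B)]$ (Theorem~2), the latter proved purely by manipulating the elementary generators (Lemmas~2--5). No decomposition of unipotents enters at all.

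There is a genuine gap in your ``Core estimate''. You assert that $[g,t_{ij}(a)]$ factors as a product of transvections $t_{pq}(a\eta_{pq})$ with $\eta_{pq}\in B$, i.e.\ that $[g,t_{ij}(a)]\in E(n,AB)$. This is false already for $g=t_{ji}(b)\in E(n,B)\subseteq\GL(n,R,B)$: the commutator $[t_{ji}(b),t_{ij}(a)]$ lies in $\GL(n,R,AB)$ but \emph{not} in $E(n,AB)$ in general, as the explicit $2\times 2$ block in the proof of Lemma~3 makes clear. These commutators are precisely the ``second type'' generators that Theorem~1 must retain alongside $E(n,R,AB+BA)$. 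The Stepanov--Vavilov decomposition, applied to $g\,t_{ij}(a)\,g^{-1}=e+auv$, yields factors with parameters in $aR$, not $aB$; passing to $[g,t_{ij}(a)]$ by multiplying by $t_{ij}(-a)$ does not force the residual parameters into $B$. A correct argument along your lines would have to keep track of factors of the shape $[t_{pq}(a'),t_{qp}(b')]$ as well, which brings you essentially back to the generator analysis carried out in this paper.

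Two minor remarks. First, the identity you quote should read $[xy,g]={}^{x}[y,g]\cdot[x,g]$, not ${}^{y}[x,g]\cdot[y,g]$. Second, you do not need to verify by hand that $[E(n,A),E(n,B)]$ is normalised by $E(n,A)$: for any subgroups $H,K$ of a group, $[H,K]$ is automatically normal in $\langle H,K\rangle$, hence in particular normalised by $H=E(n,A)$.
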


In particular, this immediately implies the following striking equality, 
see \cite{NV18}, Theorem~2.

\begin{oldtheorem}
Let $R$ be a commutative ring with $1$, let $n\ge 3$, and let $A,B$ 
be two-sided ideals of $R$. Then one has
$$ \big[E(n,R,A),E(n,R,B)\big]=\big[E(n,A),E(n,B)\big]. $$
\end{oldtheorem}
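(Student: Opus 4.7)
The plan is to let $H$ denote the subgroup generated by the three listed classes of elements, with the pair $(i,j)$ in the second class fixed once and for all (say $(1,2)$). The inclusion $H \subseteq [E(n,R,A),E(n,R,B)]$ is immediate, so the substance of the proof is the reverse inclusion, which I would establish directly, without funnelling through Theorem~A, so as to drop the quasi-finiteness hypothesis in one stroke.

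\textbf{Step 1: Normality.} I would first show that $H$ is normalised by every elementary transvection $t_{kl}(c) \in E(n,R)$. For the type~1 generators this is automatic, since $z_{ij}(ab,c)$ and $z_{ij}(ba,c)$ already form a normal generating set of $E(n,R,AB+BA)$, a subgroup obviously contained in $H$. For the lone generator $[t_{12}(a),t_{21}(b)]$ one would expand ${}^{t_{kl}(c)}[t_{12}(a),t_{21}(b)]$ via the Chevalley commutator formulas and the Hall--Witt identity; because $n \ge 3$, a third index is available, and the resulting error terms can be rewritten, modulo type~1 generators, as further commutators of the form $[t_{12}(\bullet),t_{21}(\bullet)]$.

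\textbf{Step 2: Reduction to commutators of $z$-generators.} By the classical Stein--Tits--Vaserstein theorem, $E(n,R,A)$ is generated as an abstract group by the $z_{ij}(a,c)$, and likewise for $E(n,R,B)$. Using the standard identities $[xy,z] = {}^x[y,z]\,[x,z]$ and $[x,yz]=[x,y]\,{}^y[x,z]$ together with the normality established in Step~1, the inclusion $[E(n,R,A),E(n,R,B)] \subseteq H$ reduces to showing that every single commutator $[z_{ij}(a,c),z_{kl}(b,d)]$ lies in $H$. A further round of the same manipulation, absorbing the outer $t_{ji}(c)$- and $t_{lk}(d)$-conjugations by normality, narrows the task to the two cases $[t_{ij}(a),t_{kl}(b)]$ and $[t_{ij}(a),z_{ij}(b,c)]$. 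The first case is handled by direct Chevalley calculation: when $\{i,j\}\cap\{k,l\}$ has size $0$ or $1$, the commutator is trivial or a single transvection of the form $t_{\bullet}(ab)$ or $t_{\bullet}(ba)$ lying in $E(n,AB+BA)\subseteq H$; when $\{i,j\}=\{k,l\}$, it is a conjugate under a monomial matrix of the fixed generator $[t_{12}(a),t_{21}(b)]$, hence lies in $H$ by Step~1.

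\textbf{Step 3 (the main obstacle): Eliminating $[t_{ij}(a),z_{ij}(b,c)]$.} This is the decisive new computation and precisely the place where the quasi-finiteness of Theorem~A is dispensed with. The plan is to exploit that, because $n \ge 3$, one can write $t_{ij}(a) = [t_{ik}(a),t_{kj}(1)]$ for any index $k \ne i,j$, so that the problematic element becomes a commutator of a commutator with $z_{ij}(b,c)$. Applying the Hall--Witt identity and carefully pushing each of $t_{ik}(a)$ and $t_{kj}(1)$ across $z_{ij}(b,c) = t_{ji}(c)t_{ij}(b)t_{ji}(-c)$ by means of the Chevalley formulas produces, after cancellation of the pieces that commute, only products and conjugates of transvections $t_{\bullet}(ab)$, $t_{\bullet}(ba)$ and terms of the form $[t_{\bullet}(\cdot),t_{\bullet}(\cdot)]$ supported on a single root pair; all of these lie in $H$ by Step~1 and the preceding paragraph. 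I would expect this single calculation, together with a companion identity for handling the $[t_{kj}(1),\,\cdot\,]$-commutator that appears as an error term, to be the technical heart of the paper and to be isolated as a separate lemma. The ``moreover'' clause about fixing $(i,j)$ is then free: it is exactly the assertion proven inside Step~1 that conjugates of $[t_{12}(a),t_{21}(b)]$ by arbitrary monomial matrices land back in $H$.
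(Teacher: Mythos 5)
Your proposal is correct and follows essentially the same route as the paper: your Steps 1--3 correspond precisely to the paper's Lemmas 3 and 5 (congruence of ${}^x[t_{ij}(a),t_{ji}(b)]$ to $[t_{kl}(a),t_{lk}(b)]$ modulo $E(n,R,AB+BA)$), Lemma 2 (reduction to commutators of the Stein--Tits--Vaserstein generators $z_{ij}(a,c)$), and Lemma 4 (elimination of $[t_{ij}(a),z_{ij}(b,c)]$ by writing $t_{ij}(-a)=[t_{ih}(1),t_{hj}(-a)]$ and conjugating through $z_{ij}(b,c)$), and, like the paper's Theorem 1, it never uses commutativity. The only step you leave unstated is the one that actually converts the generation result into the asserted equality, namely that the surviving generators already lie in $[E(n,A),E(n,B)]$ --- trivially so for $[t_{ij}(a),t_{ji}(b)]$, and for the first type because $z_{ij}(ab,c)=\big[{}^{t_{ji}(c)}t_{ih}(a),\,{}^{t_{ji}(c)}t_{hj}(b)\big]$ with the two arguments in $E(n,A)$ and $E(n,B)$ respectively --- after which the reverse inclusion is immediate.
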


Thereupon, the second author immediately suggested that since
everything occurs inside the absolute elementary group $E(n,R)$,
one should be able to prove Theorem D directly, by looking at the elementary generators in Theorem A and proving that the third
type of generators are redundant. Over {\it commutative\/} rings
this was essentially accomplished in the more general context of
Chevalley groups in our joint paper \cite{NZ1}.

Immediately thereafter we started to work on the unitary sequel
\cite{NZ2} and discovered that most of the requisite results, 
apart from the unitary analogue of Theorem A, hold for {\it arbitrary\/}
form rings, without any commutativity conditions. This propted
us to look closer inside the proofs of \cite{Hazrat_Vavilov_Zhang}, Theorems 3A and 3B, and the Main Lemmas of \cite{NZ1,NZ2}.
We discovered that all references to Theorem~B or any of its
special cases can be easily replaced by elementary calculations
that hold over arbitrary associative rings, and only depend on
Steinberg relations (so that they can be carried out already in 
Steinberg groups).

Finally, attempting to strengthen the birelative standard commutator formula in the arithmetic case \cite{NV19}, the first author was 
forced to look
for a stronger version of Theorem 1, with demoted set of generators. 
But a calculation that procures such a reduction was already 
contained in the papers on
bounded generation, see, for instance, \cite{CK83, Tavgen1990}.
A similar calculation is hidden also in the proof of the main theorem 
in the recent preprint by Andrei Lavrenov and Sergei Sinchuk 
\cite{Lavrenov_Sinchuk}.
Observe that, as discovered by Wilberd van der Kallen 
\cite{vdK-group}, and amply developed by Stepanov \cite{Stepanov_calculus, Stepanov_nonabelian}, one could 
reduce also the number of requisite $z_{ij}(a,c)$'s.

Since both types of generators in Theorem 1 already belong to
$\big[E(n,A),E(n,B)\big]$, we get the following generalisation of
Theorem D, for {\it arbitrary\/} associative rings.

\begin{The}
Let $R$ be any associative ring with $1$, let $n\ge 3$, and let $A,B$ 
be two-sided ideals of $R$.  Then one has
$$ \big[E(n,R,A),E(n,R,B)\big]=\big[E(n,A),E(n,B)\big]. $$
\end{The}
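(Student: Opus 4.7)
The inclusion $\bigl[E(n,A),E(n,B)\bigr]\subseteq\bigl[E(n,R,A),E(n,R,B)\bigr]$ is automatic from $E(n,A)\subseteq E(n,R,A)$ and $E(n,B)\subseteq E(n,R,B)$, so all the content lives in the reverse inclusion. My plan is to lean on Theorem~1, which has just produced a very short list of generators for $\bigl[E(n,R,A),E(n,R,B)\bigr]$, and simply verify that every generator on that list already lies in $\bigl[E(n,A),E(n,B)\bigr]$.

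The commutator generators $[t_{ij}(a),t_{ji}(b)]$ require nothing: $t_{ij}(a)\in E(n,A)$ and $t_{ji}(b)\in E(n,B)$, so they are visibly in $\bigl[E(n,A),E(n,B)\bigr]$.

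The substantive case is $z_{ij}(ab,c)$ and $z_{ij}(ba,c)$. Since $n\ge 3$, I choose an index $k$ distinct from $i$ and $j$ and use the Steinberg relation $[t_{ik}(a),t_{kj}(b)]=t_{ij}(ab)$ to write
\[
z_{ij}(ab,c)={}^{t_{ji}(c)}\bigl[t_{ik}(a),t_{kj}(b)\bigr]=\bigl[{}^{t_{ji}(c)}t_{ik}(a),\;{}^{t_{ji}(c)}t_{kj}(b)\bigr].
\]
A second application of the Steinberg relations expands the two conjugates as
\[
{}^{t_{ji}(c)}t_{ik}(a)=t_{jk}(ca)\,t_{ik}(a),\qquad {}^{t_{ji}(c)}t_{kj}(b)=t_{ki}(-bc)\,t_{kj}(b).
\]
Since $A$ is a two-sided ideal, $ca\in A$, and the first product stays in $E(n,A)$; since $B$ is two-sided, $bc\in B$, and the second stays in $E(n,B)$. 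Hence $z_{ij}(ab,c)\in\bigl[E(n,A),E(n,B)\bigr]$. The element $z_{ij}(ba,c)$ is handled symmetrically, starting from $t_{ij}(ba)=[t_{ik}(b),t_{kj}(a)]$ and swapping the roles of $A$ and $B$ throughout.

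The only real obstacle was Theorem~1 itself; once its tightened generating set is in hand, the rest is a pure Steinberg-relation exercise that uses only $n\ge 3$ and the fact that $A,B$ are two-sided. That is precisely why no commutativity assumption on $R$ is needed here, and why the previously mysterious noncommutative case of Theorem~D drops out immediately.
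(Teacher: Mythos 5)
Your proposal is correct and follows exactly the paper's route: the paper deduces this theorem from Theorem~1 by observing that both types of generators $z_{ij}(ab,c)$, $z_{ij}(ba,c)$ and $[t_{ij}(a),t_{ji}(b)]$ already lie in $\big[E(n,A),E(n,B)\big]$, the reverse inclusion being trivial. Your explicit Steinberg-relation computation expressing ${}^{t_{ji}(c)}t_{ij}(ab)$ as a commutator of an element of $E(n,A)$ with an element of $E(n,B)$ is precisely the (correct) verification the paper leaves implicit.
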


In turn, together with Theorem B this last result immediately
implies the following very broad generalisation of Theorem C.

\begin{The}
Let $R$ be a quasi-finite ring with $1$, let $n\ge 3$, and let $A,B$ 
be two-sided ideals of $R$. Then the following commutator formula 
holds
$$ \big[E(n,A),\GL(n, R, B)\big]=\big[E(n,A),E(n,B)\big]. $$
\end{The}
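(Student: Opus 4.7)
The plan is a very short sandwich argument that chains together the two ingredients the paper has already placed on the table: Theorem~B (the birelative standard commutator formula, valid for quasi-finite rings) and Theorem~2 (the unrelativised identity $\big[E(n,R,A),E(n,R,B)\big]=\big[E(n,A),E(n,B)\big]$, valid for \emph{any} associative ring with $1$).

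First, the inclusion $\big[E(n,A),E(n,B)\big]\subseteq\big[E(n,A),\GL(n,R,B)\big]$ is immediate, since $E(n,B)\subseteq\GL(n,R,B)$. For the reverse inclusion, I would exploit the trivial fact that $E(n,A)\subseteq E(n,R,A)$ (the latter being by definition the normal closure of the former inside $E(n,R)$). This gives
$$\big[E(n,A),\GL(n,R,B)\big]\subseteq\big[E(n,R,A),\GL(n,R,B)\big].$$
Now Theorem~B rewrites the right-hand side as $\big[E(n,R,A),E(n,R,B)\big]$, and Theorem~2 rewrites that in turn as $\big[E(n,A),E(n,B)\big]$. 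Combining the two inclusions yields the desired equality.

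There is no real obstacle beyond invoking the inputs in the correct order: all of the work is hidden inside Theorem~B (which requires the quasi-finiteness hypothesis) and Theorem~2 (which is the new content of this paper and rests on the sharpened generating set provided by Theorem~1). The quasi-finiteness hypothesis enters here only through Theorem~B; Theorem~2 itself is hypothesis-free, which is precisely what makes the upgrade from Theorem~C to Theorem~3 possible.
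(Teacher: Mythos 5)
Your proposal is correct and is precisely the paper's own derivation: the paper obtains this statement (Theorem~3) as an immediate consequence of Theorem~2 combined with Theorem~B, which is exactly the sandwich you spell out. The only difference is that you make the two trivial inclusions $E(n,B)\subseteq\GL(n,R,B)$ and $E(n,A)\subseteq E(n,R,A)$ explicit, which the paper leaves unsaid.
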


In \S~2 we prove Theorem 1, and thus also Theorems~2
and~3. Finally, in \S~3 we establish some further corollaries and
variations of these results and make some further related 
observations.

In the present paper we describe part of the astounding recent 
progress in the direction of unrelativisation, whose first steps
were presented in our talk ``Relativisation and unrelativisation'' 
at the Polynomial Computer Algebra 2019 (see
http://pca-pdmi.ru/2019/program, April 19).


\section{The proof of Theorem 1}

Everywhere below the commutators are left-normed 
so that for two elements $x,y$ of a group $G$ one has 
$[x,y]={x}^y\cdot y^{-1}=xyx^{-1}y^{-1}$. In the sequel
we repeatedly use standard commutator identities such as 
$[xy,z]={}^x[y,z]\cdot [x,z]$ or $[x,yz]=[x,y]\cdot{}^y[x,z]$ 
without any explicit reference.

The following lemma is a classical result due to Stein, Tits and 
Vaserstein, see \cite{Vaserstein_normal}. 

\begin{lemma}
Let $R$ be an associative ring with $1$, $n\ge 3$, and let $A$ 
be a two-sided ideal of $R$. Then as a subgroup $E(n,R,A)$ is 
generated by $z_{ij}(a,c)$, for all $1\le i\neq j\le n$, $a\in A$, 
$c\in R$.
\end{lemma}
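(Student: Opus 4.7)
The natural strategy is to verify that the proposed generating set is closed under conjugation by $E(n,R)$. Let $H$ denote the subgroup of $E(n,R)$ generated by all $z_{ij}(a,c)$ with $1\le i\neq j\le n$, $a\in A$, $c\in R$. One inclusion is immediate: each $z_{ij}(a,c)=t_{ji}(c)t_{ij}(a)t_{ji}(-c)$ is a conjugate of an element of $E(n,A)$ by an element of $E(n,R)$, so $H\subseteq E(n,R,A)$. Conversely, $z_{ij}(a,0)=t_{ij}(a)$, so $H$ already contains the generating set of $E(n,A)$; since $E(n,R,A)$ is by definition the normal closure of $E(n,A)$ in $E(n,R)$, it now suffices to show that $H$ is invariant under $E(n,R)$-conjugation.

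Because $E(n,R)$ is generated by the elementary transvections, normality of $H$ reduces to checking ${}^{t_{kl}(d)}z_{ij}(a,c)\in H$ for arbitrary admissible indices and ring elements. I would split this into four cases according to how $\{k,l\}$ intersects $\{i,j\}$. First, if $(k,l)=(j,i)$, a direct calculation absorbs the conjugation into the second coordinate, giving $z_{ij}(a,c+d)\in H$. Second, if $\{k,l\}\cap\{i,j\}=\varnothing$, the Steinberg relations force $t_{kl}(d)$ to commute with both $t_{ij}(a)$ and $t_{ji}(c)$, so $z_{ij}(a,c)$ is fixed. Third, if exactly one index is shared, Chevalley's commutator formula $[t_{pq}(x),t_{qr}(y)]=t_{pr}(xy)$ (together with the expansion $[x,y_1y_2y_3]=[x,y_1]\cdot{}^{y_1}[x,y_2]\cdot{}^{y_1y_2}[x,y_3]$) allows one to rewrite the conjugate as a product in which every surviving factor is a transvection whose ring coordinate contains the factor $a\in A$, hence already lies in $E(n,A)\subseteq H$. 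Finally, if $(k,l)=(i,j)$, I would exploit $n\ge 3$ by choosing a third index $h\notin\{i,j\}$ and writing $t_{ij}(d)=[t_{ih}(d),t_{hj}(1)]$; conjugation by $t_{ij}(d)$ then becomes iterated conjugation by four transvections, each of which falls under case (iii).

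The main technical obstacle lies in case (iii): one must carefully propagate the $A$-coordinate through repeated applications of the Chevalley formulas and confirm that every auxiliary $t_{pq}(x)$ with $x\in R\setminus A$ either cancels against its partner or recombines into a transvection whose argument picks up an ideal factor; the two-sided nature of $A$ is essential here. Case (iv) is conceptually easy but genuinely requires the hypothesis $n\ge 3$. Once these reductions are complete, $H$ is $E(n,R)$-normal and the lemma follows; this essentially reconstructs the classical argument of Stein, Tits and Vaserstein.
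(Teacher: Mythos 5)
The paper offers no proof of this lemma at all: it is quoted as a classical result of Stein, Tits and Vaserstein with a reference, so there is nothing internal to compare against, and your reconstruction must stand on its own. It does: what you outline is exactly the classical argument, namely that the subgroup $H$ generated by all $z_{ij}(a,c)$ contains $E(n,A)$, sits inside $E(n,R,A)$, and is normalised by every elementary generator $t_{kl}(d)$, the case analysis by $|\{k,l\}\cap\{i,j\}|$ being the standard one. Your cases (i) and (ii) are immediate, and in case (iii) the computation indeed comes out as you predict: for instance ${}^{t_{hi}(d)}z_{ij}(a,c)=t_{hj}(da)\,t_{hi}(-dac)\,z_{ij}(a,c)$ and ${}^{t_{ih}(d)}z_{ij}(a,c)=z_{ij}(a,c)\,t_{jh}(cad)\,t_{ih}(ad)$, so every extra factor lies in $E(n,A)\subseteq H$ (note it is the original generator, not only $A$-transvections, that survives --- a harmless amendment to your phrasing). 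The only place where your sketch is looser than it should be is case (iv): after the first of the four conjugations coming from $t_{ij}(d)=[t_{ih}(d),t_{hj}(1)]$ you are no longer conjugating the single generator $z_{ij}(a,c)$ but a product of generators of $H$ with various index pairs, so ``each falls under case (iii)'' is not literally what happens; you need cases (i)--(iii) for arbitrary generators $z_{pq}(a',c')$, plus the trivial observation that same-position transvections commute (which covers conjugating $t_{pq}(a')$ by $t_{pq}(d)$), and then a short check that no configuration of type (iv) with nonzero second argument recurs in the iteration --- it does not, since the conjugators only have indices $(i,h)$ and $(h,j)$ while the $z$'s with nonzero second argument produced along the way have indices $(i,j)$, $(h,i)$, $(j,h)$. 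With that bookkeeping made explicit your proof is complete and is, in substance, the proof the cited sources give.
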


Since $E(n,R,A)$ is normal in $E(n,R)$ by the very definition, in
particular this lemma implies that every elementary conjugate 
of  $z_{ij}(a,c)$ is again a product of generators of the same type.

The following result is \cite{Hazrat_Zhang_multiple}, Lemma 12,
a detailed proof is also reproduced in \cite{Hazrat_Vavilov_Zhang},
Lemma 2A.
\begin{lemma}
Let $R$ be an associative ring with $1$, let $n\ge 3$, and let $A,B$ 
be two-sided ideals of $R$. Then the mixed commutator subgroup 
$[E(n,R,A),E(n,R,B)]$ is generated as a group by the elements of the form
\par\smallskip
$\bullet$ ${}^x z_{ij}(ab,c)$ and ${}^x z_{ij}(ba,c)$,
\par\smallskip
$\bullet$ ${}^x [t_{ij}(a),t_{ji}(b)]$,
\par\smallskip
$\bullet$ ${}^x [t_{ij}(a),z_{ij}(b,c)]$,
\par\smallskip\noindent
where $1\le i\neq j\le n$, $a\in A$, $b\in B$, $c\in R$, and 
$x\in E(n,R)$.
\end{lemma}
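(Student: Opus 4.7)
The plan is to argue in three stages, and I begin with the reduction to commutators of simpler generators. Since $E(n,R,A)$ and $E(n,R,B)$ are both normal in $E(n,R)$, the mixed commutator is itself normal in $E(n,R)$. Combined with Lemma~1 and the identities $[xy,z] = {}^x[y,z]\cdot[x,z]$ and $[x,yz] = [x,y]\cdot{}^y[x,z]$ applied iteratively, this shows that $[E(n,R,A),E(n,R,B)]$ is generated as a group by the conjugates ${}^x\bigl[z_{i_1j_1}(a,c_1), z_{i_2j_2}(b,c_2)\bigr]$ with $x \in E(n,R)$.

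Next, I peel off the conjugations inside the $z$'s. Writing $z_{i_1j_1}(a,c_1) = {}^{t_{j_1i_1}(c_1)}t_{i_1j_1}(a)$ and using ${}^g[u,v] = [{}^gu,{}^gv]$, I absorb the outer conjugation by $t_{j_1i_1}(c_1)$ into $x$, converting the commutator into ${}^{x'}\bigl[t_{i_1j_1}(a), h\bigr]$ with $h = {}^{t_{j_1i_1}(-c_1)}z_{i_2j_2}(b,c_2) \in E(n,R,B)$. Another application of Lemma~1 expresses $h$ as a product of $z$-generators, and the identity $[u, v_1\cdots v_m] = \prod_r {}^{v_1\cdots v_{r-1}}[u, v_r]$ reduces the problem to showing that every commutator $[t_{ij}(a), z_{kl}(b,c)]$ with $a\in A$, $b\in B$, $c\in R$ is a product of elementary conjugates of the three advertised types.

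The third stage is a case analysis on the position of $(k,l)$ relative to $(i,j)$. When $(k,l) = (i,j)$, the commutator is already of type~3. When $(k,l) = (j,i)$, since $t_{ij}(a)$ commutes with $t_{ij}(c)$, a direct calculation yields $[t_{ij}(a), z_{ji}(b,c)] = {}^{t_{ij}(c)}[t_{ij}(a), t_{ji}(b)]$, an elementary conjugate of type~2. When $\{i,j\} \cap \{k,l\} = \emptyset$ the commutator is trivial by Steinberg. The remaining partial-overlap case has four analogous subcases (for instance $k = i$ with $l \notin \{i,j\}$); in each, I expand $z_{kl}(b,c) = t_{lk}(c) t_{kl}(b) t_{lk}(-c)$ and iterate the Steinberg relations $[t_{pq}(x), t_{qr}(y)] = t_{pr}(xy)$ and $[t_{pq}(x), t_{rp}(y)] = t_{rq}(-yx)$. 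After simplification, the answer collapses to a product of transvections $t_{mn}(u)$ whose arguments lie in $AB$ or $BA$; each such $t_{mn}(u) = z_{mn}(u,0)$ is then a type~1 generator, possibly after an elementary conjugation.

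The main obstacle is the bookkeeping in these partial-overlap subcases: each Steinberg application spawns transvections whose arguments lie in only $A$ or only $B$ (for example $t_{lj}(\pm ca)$ in the subcase $k=i$), and the crux is to verify that all such stray factors cancel in pairs once the pieces are collected and one reorders using the commutation $[t_{lj}(\cdot), t_{ij}(\cdot)] = 1$ (same last index). What remains is precisely a product of transvections with arguments that are genuine $A$-times-$B$ or $B$-times-$A$ products, as required.
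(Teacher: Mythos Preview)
Your argument is correct. The reduction in stages one and two is standard and sound, and the case analysis in stage three goes through: I checked all four partial-overlap subcases, and in each one the stray transvections with parameters in $A$ alone (such as $t_{hj}(\pm ca)$ when $k=i$, $l=h$, or $t_{ih}(\pm ac)$ when $l=j$, $k=h$) do cancel after reordering via the same-row or same-column commutations, leaving only transvections with parameters in $AB$ or $BA$. So the sketch is honest.

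As for comparison with the paper: the paper does not give its own proof of this lemma but simply cites it as Lemma~12 of Hazrat--Zhang (\emph{Multiple commutator formula}) and Lemma~2A of Hazrat--Vavilov--Zhang (\emph{The commutators of classical groups}). Your argument is precisely the direct elementary computation that those references carry out, so you have reconstructed the standard proof. One stylistic remark: in your final paragraph you describe the cancellation as something still to be verified rather than exhibiting it; since the whole lemma rests on that cancellation, it would strengthen the write-up to display at least one partial-overlap subcase in full (say $k=i$, $l=h\neq i,j$, where the answer is $t_{hj}(cbca)\,t_{ij}(bca)$) so the reader sees the mechanism explicitly.
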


Both types of generators in the first item belong to $E(n,R,AB+BA)$
and Lemma~1 implies that for them $x$ can be removed, without
affecting the subgroup they generate. The first type of generators 
listed in Theorem 1 still generate the same group $E(n,R,AB+BA)$.
\par
An obvious level calculation (see, for instance, \cite{Vavilov_Stepanov_standard}, Lemma 3 or \cite{Hazrat_Vavilov_Zhang}, Lemma~1A) shows the other two 
types of generators listed in Theorem A still belong to the congruence 
subgroup $\GL(n,R,AB+BA)$. Now, in the conditions of Theorem B,
an elementary conjugate of these generators is again the same
generator, modulo the subgroup $E(n,R,AB+BA)$.
\par
However, it is very easy to get rid of any reference to Theorem B
here. Let us start with the second type of generators 
$z=[t_{ij}(a),t_{ji}(b)]$. 

\begin{lemma}
Let $R$ be an associative ring with $1$, $n\ge 3$, and let $A,B$ 
be two-sided ideals of $R$. Then for any  $1\le i\neq j\le n$, 
$a\in A$, $b\in B$, and any $x\in E(n,R)$ the conjugate
${}^x [t_{ij}(a),t_{ji}(b)]$ is congruent to $[t_{ij}(a),t_{ji}(b)]$
modulo $E(n,R,AB+BA)$.
\end{lemma}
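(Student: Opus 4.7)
The plan is to prove the equivalent statement $[x, z] \in H$ for every $x \in E(n, R)$, where $z = [t_{ij}(a), t_{ji}(b)]$ and $H = E(n, R, AB + BA)$. The subset $S = \{x \in E(n,R) : [x, z] \in H\}$ is a subgroup (using the identities $[xy, z] = {}^x[y, z] \cdot [x, z]$ and $[x^{-1}, z] = {}^{x^{-1}}[x, z]^{-1}$, together with the normality of $H$ in $E(n, R)$), so it suffices to verify $[x, z] \in H$ when $x = t_{kl}(c)$ is an elementary generator. The key auxiliary principle to invoke repeatedly is that whenever $f \in H$ one has $[f, z] \in H$ automatically: by normality of $H$, $z f^{-1} z^{-1} \in H$, hence $f \cdot z f^{-1} z^{-1} = [f, z] \in H$; equivalently, ${}^f z \equiv z \pmod{H}$.

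I would then carry out a case analysis based on how $\{k, l\}$ meets $\{i, j\}$. When $\{k, l\} \cap \{i, j\} = \varnothing$, both $t_{ij}(a)$ and $t_{ji}(b)$ commute with $t_{kl}(c)$ by Steinberg, so $[t_{kl}(c), z] = 1 \in H$. The four ``one-overlap'' subcases, in which exactly one of $k, l$ belongs to $\{i, j\}$, admit a uniform treatment: the Steinberg relations compute $[t_{kl}(c), t_{ij}(a)]$ and $[t_{kl}(c), t_{ji}(b)]$ explicitly, with one of them trivial and the other a single transvection $t_{pq}(d)$ whose coefficient $d$ is one of $ac$, $ca$, $bc$, or $cb$. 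After applying $[x, yw] = [x, y] \cdot {}^y[x, w]$ and its companion $[xy, w] = {}^x[y, w] \cdot [x, w]$, the commutator $[t_{kl}(c), z]$ factors as a product of an explicit transvection $t_{p'q'}(d')$ and a residual $[t_{pq}(d), z]$, where $d'$ always lies in $AB \cup BA$ so that $t_{p'q'}(d') \in H$. The residual $[t_{pq}(d), z]$ falls under a dual one-overlap subcase, and applying the same reduction once more produces a coefficient lying in $AB + BA$; at that stage the auxiliary transvection itself lies in $H$ and the principle above closes the loop after exactly two rounds.

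The remaining two-fold overlap cases $(k, l) = (i, j)$ and $(k, l) = (j, i)$, where Steinberg affords no direct simplification, are handled by fixing a third index $h \notin \{i, j\}$ (which exists because $n \geq 3$) and invoking the decompositions $t_{ij}(c) = [t_{ih}(c), t_{hj}(1)]$ and $t_{ji}(c) = [t_{jh}(c), t_{hi}(1)]$. Since conjugation by a commutator $[u, v]$ factors as the composition ${}^u \circ {}^v \circ {}^{u^{-1}} \circ {}^{v^{-1}}$, and in either decomposition each of these four factors is a conjugation by a transvection already treated under a one-overlap subcase, the overall conjugation still preserves $z$ modulo $H$. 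The only delicate point---and really the only substantive content of the argument besides a mechanical application of the Steinberg relations and basic commutator identities---is verifying that the one-overlap recursion terminates: each pass swaps the side of the overlap and premultiplies the coefficient by $a$ or $b$, so after exactly two passes the coefficient lands in $AB + BA$, at which point the auxiliary principle takes over.
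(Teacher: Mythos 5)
Your proposal is correct and follows essentially the same route as the paper: reduce to a single elementary conjugator (your subgroup $S$ versus the paper's induction on word length), split into the disjoint, one-overlap and $(i,j)/(j,i)$ cases, and handle the last case by writing $t_{ij}(c)=[t_{ih}(c),t_{hj}(1)]$, $t_{ji}(c)=[t_{jh}(c),t_{hi}(1)]$ and composing conjugations already treated. The only divergence is in the one-overlap case, where the paper reads off $[t_{kl}(c),z]$ in one stroke from the explicit $2\times 2$ block form of $z$ and $z^{-1}$ (e.g.\ $[t_{ih}(c),z]=t_{ih}(-abc-ababc)\,t_{jh}(-babc)$), whereas you run a two-pass recursion purely on Steinberg relations; that recursion does terminate exactly as you claim, since each pass splits off a transvection with coefficient in $AB\cup BA$ and after two passes the residual conjugator $t_{pq}(d)$ has $d\in AB+BA$, so the normality principle closes the argument.
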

\begin{proof}
Clearly, $z=[t_{ij}(a),t_{ji}(b)]$ resides in the image of
the fundamental embedding of $E(2,R)$ into $E(n,R)$ in the
$i$-th and $j$-th rows and columns, where one has
\par\smallskip
$$ z=\left[\begin{pmatrix} 1&a\\ 0&1\end{pmatrix},
\begin{pmatrix} 1&0\\ b&1\end{pmatrix}\right]=
\begin{pmatrix} 1+ab+abab&-aba\\ bab&1-ba\end{pmatrix} $$
\noindent
and 
$$ z^{-1}=
\left[\begin{pmatrix} 1&0\\ b&1\end{pmatrix},
\begin{pmatrix} 1&a\\ 0&1\end{pmatrix}\right]=
\begin{pmatrix} 1-ab&aba\\ -bab&1+ba+baba\end{pmatrix}. $$
\par\smallskip
Consider the elementary conjugate ${}^xz$. We argue by induction 
on the length of $x\in E(n,R)$ in elementary generators. Let
$x=yt_{kl}(c)$, where $y\in E(n,R)$ is shorter than $x$, whereas
$1\le k\neq l\le n$, $c\in R$. 
\par\smallskip
$\bullet$ If $k,l\neq i,j$, then $t_{kl}(c)$ commutes with $z$ 
and can be discarded.
\par\smallskip
$\bullet$ On the other hand, for any $h\neq i,j$ the above formulas 
for $z$ and $z^{-1}$ immediately imply that
\begin{alignat*}{2}
&[t_{ih}(c),z]=t_{ih}(-abc-ababc)t_{jh}(-babc),\quad
&&[t_{jh}(c),z]=t_{ih}(abac)t_{jh}(bac),\\
&[t_{hi}(c),z]=t_{hi}(cab)t_{hj}(-caba),
&&[t_{hj}(c),z]=t_{hi}(cbab)t_{hj}(-cba-cbaba).
\end{alignat*}
\par\noindent
All factors on the right hand side belong already to $E(n,AB+BA)$
This means that
$$ {}^xz\equiv {}^yz \pamod{E(n,R,AB+BA)}. $$
\par\smallskip
$\bullet$ Finally, for $(k,l)=(i,j),(j,i)$ we can take an $h\neq i,j$
and rewrite $t_{kl}(c)$ as a commutator $t_{ij}(c)=[t_{ih}(c),t_{hj}(1)]$
or $t_{ji}(c)=[t_{h}(c),t_{hi}(1)]$ and apply the previous item to
get the same congruence modulo $E(n,R,AB+BA)$.
\par\smallskip
By induction we get that ${}^xz\equiv z\pamod{E(n,R,AB+BA)}$.
\end{proof}

Thus, to prove the first claim of Theorem 1 it only remains to 
establish the following lemma. For commutative rings it is essentially
the simplest special case of the Main Lemma of \cite{NZ1}. However,
there it is expressed in the language of root elements. Even though 
commutativity is not used in the proof in any material way, it is
formally assumed. For completeness, we reproduce the proof of a
somewhat stronger fact, in matrix notation.

\begin{lemma}
Let $R$ be an associative ring with $1$, $n\ge 3$, and let $A,B$ 
be two-sided ideals of $R$. Then for any  $1\le i\neq j\le n$, 
$a\in A$, $b\in B$, $c\in R$
and any $x\in E(n,R)$ the conjugate ${}^x[t_{ij}(a),z_{ij}(b,c)]$ 
of a generator of the third type 
is congruent to an elementary conjugate of some generator 
$[t_{kl}(a'),t_{lk}(b')]$, $1\le k\neq l\le n$, $a'\in A$, $b'\in B$,
of the second type, modulo $E(n,R,AB+BA)$.
\end{lemma}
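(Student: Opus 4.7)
The plan is to reduce ${}^x[t_{ij}(a),z_{ij}(b,c)]$ modulo $E(n,R,AB+BA)$ to an elementary conjugate of a commutator of the form $[t_{kl}(a'),t_{lk}(b')]$ by a chain of commutator identities and Steinberg-relation computations, exploiting $n\ge 3$ to introduce a third index $h$. Since $E(n,R,AB+BA)$ is normal in $E(n,R)$, any congruence established for $[t_{ij}(a),z_{ij}(b,c)]$ conjugates to one for ${}^x[t_{ij}(a),z_{ij}(b,c)]$, so the outer conjugation will be deferred to the end.

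The first step is to untwist the Vaserstein generator by writing $z_{ij}(b,c)={}^{t_{ji}(c)}t_{ij}(b)$ and applying $[x,{}^yz]={}^y[{}^{y^{-1}}x,z]$, which gives
$$[t_{ij}(a),z_{ij}(b,c)]={}^{t_{ji}(c)}\bigl[z_{ij}(a,-c),\,t_{ij}(b)\bigr].$$
So it suffices to analyse $[z_{ij}(a,-c),t_{ij}(b)]$ modulo $E(n,R,AB+BA)$. Next, choose $h\notin\{i,j\}$, decompose $t_{ij}(b)=[t_{ih}(b),t_{hj}(1)]$, and apply the general commutator identity
$$[z,[u,v]]=[z,u]\cdot{}^u[z,v]\cdot{}^{uvu^{-1}}[z,u]^{-1}\cdot{}^{[u,v]}[z,v]^{-1}.$$

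The heart of the argument is a pair of Steinberg computations for $z:=z_{ij}(a,-c)=t_{ji}(-c)t_{ij}(a)t_{ji}(c)$, carried out via $[xyz,w]={}^{xy}[z,w]\cdot{}^x[y,w]\cdot[x,w]$. These should yield
$$[z,t_{ih}(b)]=t_{jh}(-cacb)\,t_{ih}(acb),\qquad [z,t_{hj}(1)]=t_{hi}(cac)\,t_{hj}(ca)=:w.$$
The first element has both coefficients in $AB$ (since $acb=a(cb)$ and $cacb=(ca)(cb)$), hence lies in $E(n,R,AB+BA)$; the second lies only in $E(n,A)$. Inserting this into the four-term expansion puts two of the four factors into $E(n,R,AB+BA)$ by normality; using ${}^yw=[y,w]\cdot w$ to rearrange the remaining two factors then gives
$$[z,t_{ij}(b)]\equiv [t_{ih}(b),w]\cdot[t_{ij}(b),w]^{-1}\pamod{E(n,R,AB+BA)}.$$

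Finally, each of these two commutators can be expanded by $[y,w_1w_2]=[y,w_1]\cdot{}^{w_1}[y,w_2]$ and the four resulting Steinberg commutators computed. Three of them vanish modulo $E(n,R,AB+BA)$: $[t_{ih}(b),t_{hj}(ca)]=t_{ij}(bca)$ has coefficient in $BA$, $[t_{ij}(b),t_{hi}(cac)]=t_{hj}(-cacb)$ has coefficient in $AB$, and $[t_{ij}(b),t_{hj}(ca)]=1$ by the usual Steinberg relation. Only $[t_{ih}(b),t_{hi}(cac)]$ survives, so
$$[z_{ij}(a,-c),t_{ij}(b)]\equiv [t_{ih}(b),t_{hi}(cac)]\pamod{E(n,R,AB+BA)}.$$
With $cac\in A$ and $b\in B$ the right-hand side is, up to inversion, a second-type generator $[t_{hi}(cac),t_{ih}(b)]$. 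Reassembling gives
$${}^x[t_{ij}(a),z_{ij}(b,c)]\equiv {}^{x\,t_{ji}(c)}[t_{ih}(b),t_{hi}(cac)]\pamod{E(n,R,AB+BA)},$$
the required elementary conjugate of a second-type generator.

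The main obstacle I expect is the bookkeeping of the third step: of the four factors in the expansion of $[z,[u,v]]$ only a single commutator should survive modulo $E(n,R,AB+BA)$, and this relies crucially on $[z,t_{ih}(b)]$ producing coefficients genuinely in the product $AB$ rather than merely in $A$ or $B$ separately. The individual Steinberg computations are routine but numerous, and signs, slot assignments, and the distinction between $AB$, $BA$ and $A,B$ on their own need to be tracked consistently throughout.
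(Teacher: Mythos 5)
Your proof is correct: the untwisting identity $[x,{}^yz]={}^y[{}^{y^{-1}}x,z]$, the four-term expansion of $[z,[u,v]]$, and the two Steinberg computations $[z_{ij}(a,-c),t_{ih}(b)]=t_{jh}(-cacb)\,t_{ih}(acb)$ and $[z_{ij}(a,-c),t_{hj}(1)]=t_{hi}(cac)\,t_{hj}(ca)$ all check out, every discarded factor really lies in $E(n,AB)$ or $E(n,BA)$, and normality of $E(n,R,AB+BA)$ in $E(n,R)$ legitimises each cancellation as well as the final conjugation by $x\,t_{ji}(c)$. Your route is the same in spirit as the paper's --- choose a third index $h$, rewrite one transvection as a commutator through $h$, and kill the unwanted factors modulo $E(n,R,AB+BA)$ --- but the execution is genuinely different: the paper keeps $z_{ij}(b,c)$ as the conjugating element, decomposes the $A$-side transvection as $t_{ij}(-a)=[t_{ih}(1),t_{hj}(-a)]$ and conjugates that commutator by $z_{ij}(b,c)$ explicitly, ending with ${}^{x\,t_{ij}(a)t_{ih}(1)}[t_{jh}(-cbc),t_{hj}(-a)]$, whereas you first move the twist onto the $A$-side element (turning it into $z_{ij}(a,-c)$), decompose the $B$-side transvection as $t_{ij}(b)=[t_{ih}(b),t_{hj}(1)]$, and run the four-term identity, ending with ${}^{x\,t_{ji}(c)}[t_{ih}(b),t_{hi}(cac)]$. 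Both variants prove the lemma; note that your surviving commutator is, literally, the inverse $[t_{hi}(cac),t_{ih}(b)]^{-1}$ of a second-type generator, but the paper's $[t_{jh}(-cbc),t_{hj}(-a)]$ has exactly the same status (its inverse is the literal generator), and for the intended use in Theorem 1 --- generation --- this distinction is immaterial. A small bonus of your version is that only a single conjugating transvection, $t_{ji}(c)$, appears in front of the surviving generator, and the bookkeeping through the four-term identity is arguably more transparent than the entrywise conjugation by $z_{ij}(b,c)$ in the paper.
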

\begin{proof}
Indeed, let $z=[t_{ij}(a),z_{ij}(b,c)]$. Take any $h\neq i,j$. Then
$$ z=[t_{ij}(a),z_{ij}(b,c)]=t_{ij}(a)\cdot{}^{z_{ij}(b,c)}t_{ij}(-a)= 
t_{ij}(a)\cdot{}^{z_{ij}(b,c)}[t_{ih}(1),t_{hj}(-a)]. $$
\noindent
Thus, 
\begin{multline*}
z=t_{ij}(a)\cdot[{}^{z_{ij}(b,c)}t_{ih}(1),{}^{z_{ij}(b,c)}t_{hj}(-a)]=\\
t_{ij}(a)\cdot [t_{ih}(1-bc)t_{jh}(-cbc),t_{hi}(-acbc)t_{hj}(-a(1-cb))]=\\
t_{ij}(a)\cdot[t_{ih}(1)u,t_{hj}(-a)v],
\end{multline*}
\noindent
where
$$ u=t_{jh}(-cbc)t_{ih}(-bc)\in E(n,B),\quad 
v=t_{hi}(-acbc)t_{hj}(acb)\in E(n,AB). $$
\noindent
Thus, 
$$ z\equiv t_{ij}(a)\cdot[t_{ih}(1)u,t_{hj}(-a)] \pamod{E(n,R,AB+BA)}. $$
\noindent
On the other hand, 
$$  t_{ij}(a)\cdot [t_{ih}(1)u,t_{hj}(-a)]=
 t_{ij}(a)\cdot{}^{t_{ih}(1)}[u,t_{hj}(-a)]\cdot 
t_{ij}(-a), $$
\par\noindent
whereas
\begin{multline*}
[u,t_{hj}(-a)]={}^{t_{jh}(-cbc)}t_{ih}(bca)\cdot 
[t_{jh}(-cbc),t_{hj}(-a)]\equiv \\ 
[t_{jh}(-cbc),t_{hj}(-a)]  \pamod{E(n,R,AB+BA)}. 
 \end{multline*}
 \par
 Summarising the above, we see that
 $$ {}^xz\equiv {}^{xt_{ij}(a)t_{ih}(1)}[t_{jh}(-cbc),t_{hj}(-a)]  \pamod{E(n,R,AB+BA)}, $$
\par\noindent
where $[t_{jh}(-cbc),t_{hj}(-a)]$ is the second type generator,
as claimed. 
\end{proof}

At this point we have already established the first claim of 
Theorem 1 --- and thus also Theorems 2 and 3. The rest is 
a bonus, that we need for more sophisticated applications.
The proof of the final claim of Theorem 1 is in fact a refinement 
of the proof of \cite{NV19}, Theorem 3. Again,
formally commutativity was assumed there, but can be easily
circumvented.

\begin{lemma}
Let $R$ be an associative ring with $1$, $n\ge 3$, and let $A,B$ 
be two-sided ideals of $R$. Then for any  $1\le i\neq j\le n$, any
$1\le k\neq l\le n$, and $a\in A$, $b\in B$, the elementary
commutator
$[t_{ij}(a),t_{ji}(b)]$ is congruent to $[t_{kl}(a),t_{lk}(b)]$
modulo $E(n,R,AB+BA)$.
\end{lemma}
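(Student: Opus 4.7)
The plan is to exploit the hypothesis $n \ge 3$ so as to pick a third index $h \notin \{i, j\}$, and use conjugation by appropriate Weyl elements to transport the commutator $[t_{ij}(a), t_{ji}(b)]$ to commutators on adjacent pairs of indices. Because Weyl elements lie in $E(n, R)$, Lemma~3 guarantees that each such conjugation changes the commutator only by an element of $E(n, R, AB+BA)$. Chaining single-index changes together, we can then reach any prescribed target pair $(k, l)$.

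For the basic step, I would fix $h \ne i, j$ and form the Weyl element $w_{jh} = t_{jh}(1)\, t_{hj}(-1)\, t_{jh}(1) \in E(n, R)$. A direct $3 \times 3$ computation in the block on rows and columns $i, j, h$ yields
$$w_{jh}\, t_{ij}(a)\, w_{jh}^{-1} = t_{ih}(-a), \qquad w_{jh}\, t_{ji}(b)\, w_{jh}^{-1} = t_{hi}(-b),$$
so that ${}^{w_{jh}}[t_{ij}(a), t_{ji}(b)] = [t_{ih}(-a), t_{hi}(-b)]$. By Lemma~3 this gives $[t_{ij}(a), t_{ji}(b)] \equiv [t_{ih}(-a), t_{hi}(-b)] \pamod{E(n, R, AB+BA)}$. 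Conjugation by $w_{ih}$ analogously changes the first index, and conjugation by $w_{ij}$ (inside the $\GL(2, R)$ block on rows $i, j$) performs the swap, producing $[t_{ji}(-a), t_{ij}(-b)]$.

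The main obstacle is the pair of minus signs introduced by Weyl conjugation, which I would dispose of via the elementary group-theoretic identity $[x^{-1}, y^{-1}] = {}^{x^{-1} y^{-1}}[x, y]$ (immediate from expanding both sides to $x^{-1} y^{-1} x y$). Applying it with $x = t_{ih}(a)$ and $y = t_{hi}(b)$ rewrites $[t_{ih}(-a), t_{hi}(-b)]$ as a conjugate of $[t_{ih}(a), t_{hi}(b)]$ by the element $t_{ih}(-a)\, t_{hi}(-b) \in E(n, R)$, and a second appeal to Lemma~3 absorbs this conjugation to give
$$[t_{ij}(a), t_{ji}(b)] \equiv [t_{ih}(a), t_{hi}(b)] \pamod{E(n, R, AB+BA)}.$$
The same trick eliminates the signs in the first-index and swap cases. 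Finally, to reach an arbitrary $(k, l)$ we iterate: if $\{i, j\}$ and $\{k, l\}$ share an index, one or two such steps suffice, while the remaining disjoint case (which requires $n \ge 4$) is handled by routing through any intermediate pair meeting both $\{i, j\}$ and $\{k, l\}$.
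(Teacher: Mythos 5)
Your argument is correct, but it takes a genuinely different route from the paper's. The paper changes the position of the elementary commutator by a purely unipotent calculation: it rewrites $t_{ij}(-a)$ as the commutator $[t_{ih}(a),t_{hj}(-1)]$, expands the conjugation by $t_{ji}(b)$ via the Steinberg relations, notes that the extra factors so produced lie in $E(n,AB)$ or $E(n,BA)\le E(n,R,AB+BA)$, and then applies Lemma~3 once to strip a remaining conjugation by $t_{hj}(-1)$; this yields $[t_{ij}(a),t_{ji}(b)]\equiv[t_{ih}(a),t_{hi}(b)]$ and, by the symmetric computation, $\equiv[t_{hj}(a),t_{jh}(b)]$, with no sign corrections, after which any position $(k,l)$ is reached in at most three such moves. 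You instead conjugate by the Weyl elements $w_{jh},w_{ih},w_{ij}\in E(n,R)$ and let Lemma~3 absorb the conjugation, then dispose of the resulting minus signs by the identity $[x^{-1},y^{-1}]={}^{x^{-1}y^{-1}}[x,y]$ together with a second appeal to Lemma~3 (with conjugator $t_{ih}(-a)t_{hi}(-b)\in E(n,R)$). All the ingredients you use are valid over an arbitrary associative ring: the conjugation formulas $w_{jh}t_{ij}(a)w_{jh}^{-1}=t_{ih}(-a)$ and $w_{jh}t_{ji}(b)w_{jh}^{-1}=t_{hi}(-b)$, the commutator identity, and the routing argument (the disjoint-pair case indeed only arises for $n\ge 4$), so your proof is complete. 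What your approach buys is brevity and transparency --- everything is delegated to Lemma~3 plus standard Weyl-element conjugation, and the swap move $(i,j)\mapsto(j,i)$ comes in a single step. What the paper's approach buys is that it never leaves products of elementary transvections with explicitly located entries in $AB$ and $BA$, avoiding Weyl elements and sign bookkeeping altogether; this style of congruence computation is exactly the one reused for the further congruences listed in Section~3 and in the unitary and Chevalley analogues.
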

\begin{proof}
Take any $ h\neq i,j$ and rewrite the elementary commutator
$z=\big[t_{ij}(a),t_{ji}(b)\big]$ as
$$ z=t_{ij}(a)\cdot{}^{t_{ji}(b)}t_{ij}(-a)=
t_{ij}(a)\cdot{}^{t_{ji}(b)}\big[t_{ih}(a),t_{hj}(-1)\big]. $$
\noindent
Expanding the conjugation by $t_{ji}(b)$, we see that 
$$ z=t_{ij}(a)\cdot\big[{}^{t_{ji}(b)}t_{ih}(a),{}^{t_{ji}(b)}t_{hj}(-1)\big] = t_{ij}(a)\cdot[t_{jh}(ba)t_{ih}(a),t_{hj}(-1)t_{hi}(b)\big]. $$
\noindent
Now, the first factor $t_{jh}(ba)$ of the first argument in this last commutator already belongs to the group $E(n,BA)$ which is 
contained in $E(n,R,AB+BA)$. Thus, as above, 
$$ z\equiv  t_{ij}(a)\cdot[t_{ih}(a),t_{hj}(-1)t_{hi}(b)\big] \pamod{E(n,R,AB+BA)}. $$
\noindent
Using multiplicativity of the commutator w.r.t. the second argument, cancelling the first two factors of the resulting 
expression, and then applying Lemma~3 we see that
$$ z\equiv 
{}^{t_{hj}(-1)}\big[t_{ih}(a),t_{hi}(b)\big] 
\equiv \big[t_{ih}(a),t_{hi}(b)\big] 
\pamod{E(n,R,AB+BA)}. $$
\par
Similarly, rewriting the commutator $z$ differently, as
$$ z=\big[t_{ij}(a),t_{ji}(b)\big]={}^{t_{ij}(a)}t_{ji}(b)\cdot t_{ji}(-b)=
{}^{t_{ij}(a)}\big[t_{jh}(b),t_{hi}(1)\big]\cdot t_{ji}(-b), $$
\noindent
we get the congruence 
$$ z\equiv \big[t_{hj}(a),t_{jh}(b)\big] 
\pamod{E(n,R,AB+BA)}. $$
\par
Obviously, for $n\ge 3$ we can pass from any position $(i,j)$, 
$i\neq j$, to any other such position $(k,l)$, $k\neq l$, by a 
sequence of at most three such elementary moves.
\end{proof}

This finishes the proof of Theorem 1.


\section{Further variations and final remarks}

The following result is a generalisation of the unrelative normality theorem by Bogdan Nica, see \cite{Nica}, Theorem 2, which
pertained to the commutative case. It is an immediate corollary of
our Theorem 3.

\begin{The}
Let $R$ be a quasi-finite ring with $1$, let $n\ge 3$, and let $A$ 
be a two sided ideal of $R$. Then $E(n,A)$ is normal in 
$\GL(n, R, A)$.
\end{The}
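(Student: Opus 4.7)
The plan is to deduce normality directly from Theorem 3 by rewriting conjugation as the product of a commutator and the original element. First I would note that $E(n,A)\subseteq\GL(n,R,A)$, since each elementary transvection $t_{ij}(a)$ with $a\in A$ is congruent to the identity modulo $A$; this ensures the normality statement is even meaningful.

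Next I would invoke Theorem 3 in the special case $B=A$, which yields
$$ \big[E(n,A),\GL(n,R,A)\big]=\big[E(n,A),E(n,A)\big]\subseteq E(n,A). $$
The right-hand inclusion is trivial since $E(n,A)$ is a subgroup, so all commutators of its own elements already lie in it. The whole force of the statement is concentrated in this one invocation of Theorem 3; that is the genuine work of the proof.

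With this in hand, the normality is a one-line manipulation. For any $g\in\GL(n,R,A)$ and any $x\in E(n,A)$ one has
$$ gxg^{-1}=[g,x]\cdot x=[x,g]^{-1}\cdot x. $$
The commutator $[x,g]$ lies in $\big[E(n,A),\GL(n,R,A)\big]$, which by the previous paragraph is contained in $E(n,A)$. Hence $[g,x]\in E(n,A)$, and therefore $gxg^{-1}\in E(n,A)$, as required. Since $E(n,A)$ is generated by such $x$, this shows $gE(n,A)g^{-1}\subseteq E(n,A)$ for every $g\in\GL(n,R,A)$, which is precisely the claim.

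There is no genuine obstacle here beyond having Theorem 3 available; once the birelative-unrelative commutator equality is at our disposal, the argument is a formal consequence via the identity $gxg^{-1}=[g,x]\,x$. In particular, no further reduction to the commutative case (as in \cite{Nica}) is needed, and the quasi-finiteness assumption enters only through its use in Theorem B, which in turn underlies Theorem 3.
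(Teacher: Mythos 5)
Your proposal is correct and is precisely the argument the paper intends: the authors state Theorem 4 is an immediate corollary of Theorem 3, and your specialisation to $B=A$ together with the identity $gxg^{-1}=[g,x]\,x$ is exactly that deduction.
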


Let us mention another amazing corollary of Theorem 1, in the
style of stability results without stability conditions by Tony Bak,
see \cite{Bak}. With this end, observe that Lemma~3 implies that the quotient
$$ \big[E(n,A),E(n,B)\big]/E(n,R,AB+BA) $$ 
\noindent
is central in $E(n,R)/E(n,R,AB+BA)$. In other words, the
following holds.

\begin{lemma}
Let $R$ be an associative ring with $1$, $n\ge 3$, and let $A,B$ 
be two-sided ideals of $R$. Then
$$ \big[\big[E(n,A),E(n,B)\big],E(n,R)\big]=E(n,R,AB+BA). $$
\end{lemma}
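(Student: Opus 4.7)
The plan is to establish the equality by two opposite inclusions; write $H = \big[[E(n,A),E(n,B)],E(n,R)\big]$ for brevity.

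For $H \subseteq E(n,R,AB+BA)$, I would follow the centrality argument already foreshadowed in the text. By Theorem~1 (combined with Theorem~2), the subgroup $[E(n,A),E(n,B)]$ is generated by elements of the three forms $z_{ij}(ab,c)$, $z_{ij}(ba,c)$ and $[t_{ij}(a),t_{ji}(b)]$. Generators of the first two forms already lie in $E(n,R,AB+BA)$, which is normal in $E(n,R)$, so their elementary conjugates, and in particular their commutators with any $x\in E(n,R)$, remain there. For the third form Lemma~3 gives ${}^{x}[t_{ij}(a),t_{ji}(b)] \equiv [t_{ij}(a),t_{ji}(b)]$ modulo $E(n,R,AB+BA)$, hence again $[x,[t_{ij}(a),t_{ji}(b)]] \in E(n,R,AB+BA)$. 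An inductive use of the identity $[x,uv] = [x,u]\cdot{}^{u}[x,v]$ combined with the normality of $E(n,R,AB+BA)$ then yields $[x,u]\in E(n,R,AB+BA)$ for every $u\in [E(n,A),E(n,B)]$, which is precisely the desired inclusion.

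For the reverse inclusion $E(n,R,AB+BA) \subseteq H$, I would first note that $[E(n,A),E(n,B)]$ is normal in $E(n,R)$: via Theorem~2 it coincides with $[E(n,R,A),E(n,R,B)]$, a commutator of two $E(n,R)$-normal subgroups, so $H$ is itself normal in $E(n,R)$. Consequently it suffices to exhibit the elementary generators $t_{ij}(d)$ of $E(n,AB+BA)$ inside $H$, and by additivity of $t_{ij}$ in the second argument this reduces to the cases $d=ab$ and $d=ba$. For any fixed $1\le i\neq j\le n$, choose an auxiliary index $h\notin\{i,j\}$, which is possible because $n\ge 3$. Two successive applications of the Steinberg relation $[t_{pq}(x),t_{qr}(y)]=t_{pr}(xy)$ give
$$ t_{ih}(ab) = \big[t_{ij}(a),\,t_{jh}(b)\big] \in [E(n,A),E(n,B)], $$
and thereupon
$$ t_{ij}(ab) = \big[t_{ih}(ab),\,t_{hj}(1)\big] \in H. $$
A symmetric calculation with $a$ and $b$ swapped places $t_{ij}(ba)$ inside $H$ as well.

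The main obstacle is conceptual rather than technical: one must recognise that the centrality argument suggested by Lemma~3 furnishes only one of the two containments, and supply the matching one by the short two-step Steinberg commutator identity above. Once this move is isolated, nothing beyond the Steinberg relations and the normality of $H$ in $E(n,R)$ is needed; no commutativity, stability or level-arithmetic input is required.
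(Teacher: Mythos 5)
Your proof is correct and follows essentially the route the paper intends: the inclusion $\big[[E(n,A),E(n,B)],E(n,R)\big]\le E(n,R,AB+BA)$ is exactly the centrality observation drawn from Lemma~3 and the generator list of Theorem~1 (via Theorem~2), which is all the paper says before stating Lemma~6. The only difference is that you spell out the reverse inclusion, which the paper leaves tacit, via the double Steinberg commutator $t_{ij}(ab)=\big[[t_{ij}(a),t_{jh}(b)],t_{hj}(1)\big]$ together with normality of the double commutator subgroup in $E(n,R)$ --- a correct and welcome completion of the sketch.
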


But now Theorem 1 implies surjective stability of such quotients,
which is a generalisation of the first half of \cite{Hazrat_Vavilov_Zhang}, Lemma 15, to arbitrary associative
rings, without any stability conditions, or commutativity conditions.
Indeed, in view of Theorem 1 and Lemma 6 as a normal subgroup of $E(n,R)$ the group $[E(n,A),E(n,B)]$ is generated by 
$[E(3,A),E(3,B)]$. This can be restated as follows.

\begin{The}
Let $R$ be any associative ring with $1$, and let $A$ 
and $B$ be two sided ideals of $R$. Then for all $n\ge 3$ the
stability map 
\begin{multline*}
\big[E(n,A),E(n,B)\big]/E(n,R,AB+BA) \map\\
\big[E(n+1,A),E(n+1,B)\big]/E(n+1,R,AB+BA) 
\end{multline*}
\noindent
is surjective.
\end{The}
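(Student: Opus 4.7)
The proof is essentially a corollary of Theorem~1 combined with the evident functoriality of the elementary structure in $n$. The plan is to exploit the fact that Theorem~1 gives a very economical generating set for the target quotient in which every generator visibly lifts to the source.

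First, I would apply Theorem~1 at level $n+1$ to the subgroup $[E(n+1,A),E(n+1,B)]$ (which equals $[E(n+1,R,A),E(n+1,R,B)]$ by Theorem~2). The generators of the first type, $z_{ij}(ab,c)$ and $z_{ij}(ba,c)$, all lie in $E(n+1,R,AB+BA)$ by a direct level calculation, and hence become trivial in the quotient $\big[E(n+1,A),E(n+1,B)\big]/E(n+1,R,AB+BA)$. Therefore this quotient is generated by the images of the elementary commutators $[t_{ij}(a),t_{ji}(b)]$ with $a\in A$, $b\in B$, and by the ``moreover'' clause of Theorem~1 it suffices to fix a single pair $(i,j)$. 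Since $n\ge 3$, the pair $(1,2)$ exists simultaneously at levels $n$ and $n+1$, so I take it as the fixed pair.

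Next, I would invoke the standard compatibility of the stability embedding $\GL(n,R)\hookrightarrow\GL(n+1,R)$ with all the elementary data: the matrix $[t_{12}(a),t_{21}(b)]$ formed inside $E(n,R)$ is sent to the matrix of the same name inside $E(n+1,R)$, and $E(n,R,AB+BA)$ is sent into $E(n+1,R,AB+BA)$. Consequently the stability map descends to a well-defined homomorphism on the quotients, under which each distinguished generator of the target is the image of the corresponding generator of the source. Surjectivity follows at once.

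There is no substantive obstacle: Theorem~1 has already concentrated the generators of $[E(n+1,A),E(n+1,B)]$ modulo $E(n+1,R,AB+BA)$ onto a single index pair that is simultaneously available at level $n$, and Lemma~6 guarantees that the quotients in question are honest (in fact abelian) groups, so that the generator-by-generator matching suffices. The only routine check is the compatibility of the embedding with the congruence-level subgroups, which is immediate from the definitions.
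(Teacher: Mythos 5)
Your proposal is correct and follows essentially the same route as the paper: invoke Theorem~1 (at level $n+1$) to reduce the generators modulo $E(n+1,R,AB+BA)$ to the elementary commutators $[t_{ij}(a),t_{ji}(b)]$ for a single index pair, observe that these already live at level $n$ (indeed at level $3$) under the stability embedding, and conclude surjectivity, with Lemma~6 guaranteeing the quotients behave well. No gaps.
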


This quotient occurs surprisingly often in seemingly unrelated
problems, and is so interesting in itself, that we are now hatching
the idea to definitively comprehend its structure.
Lemmas 3 and 5 assert that some elementary commutators
and their conjugates are congruent modulo $E(n,R,AB+BA)$.
We have several further results in the same spirit. For instance,
one has
\par\smallskip
$\bullet$
$\big[t_{ij}(ac),t_{ji}(b)\big] \equiv \big[t_{ij}(a),t_{ji}(cb)\big] 
\pamod{E(n,R,AB+BA)}$,
\par\smallskip
$\bullet$
$\big[t_{ij}(a_1+a_2),t_{ji}(b)\big] \equiv 
\big[t_{ij}(a_1),t_{ji}(b)\big]\cdot\big[t_{ij}(a_2),t_{ji}(b)\big] 
\pamod{E(n,R,AB+BA)}$,
\par\smallskip
$\bullet$ $\big[t_{ij}(a),t_{ji}(b_1+b_2)\big] \equiv 
\big[t_{ij}(a),t_{ji}(b_1)\big]\cdot\big[t_{ij}(a),t_{ji}(b_2)\big] 
\pamod{E(n,R,AB+BA)}$,
\par\smallskip
$\bullet$ ${\big[t_{ij}(a),t_{ji}(b)\big]}^{-1} 
\equiv \big[t_{ij}(-a),t_{ji}(b)\big] 
\equiv \big[t_{ij}(a),t_{ji}(-b)\big] 
\pamod{E(n,R,AB+BA)}$,
\par\smallskip
$\bullet$ $\big[t_{ij}(a_1),t_{ji}(b)\big]\equiv 
\big[t_{ij}(a_2),t_{ji}(b)\big] \pamod{E(n,R,AB+BA)}$,\par
{}\hskip3truein if $a_1\equiv a_2\pamod{AB+BA+A^2}$
\par\smallskip
$\bullet$ $\big[t_{ij}(a),t_{ji}(b_1)\big]\equiv 
\big[t_{ij}(a),t_{ji}(b_1)\big] \pamod{E(n,R,AB+BA)}$,\par
{}\hskip3truein if $b_1\equiv b_2\pamod{AB+BA+B^2}$
\par\smallskip
\noindent
etc. We have not made any attempt to systematically collect all
such congruences in the present article, since they are not
directly needed to prove Theorem 1. But they may turn out
very useful to control the quotient 
$\big[E(n,A),E(n,B)\big]/E(n,R,AB+BA)$. Observe that by Lemma 6
this quotient is central in $E(n,R)/E(n,R,AB+BA)$, and thus,
in particular, it is itself abelian. We intend to list all such
properties in a subsequent paper, where we propose to assail
the following tantalising problem.

\begin{problem}
Give a presentation of
$$ \big[E(n,A),E(n,B)\big]/\EE(n,R,AB+BA) $$
\noindent
by generators and relations.
\end{problem}

Let us mention yet another corollary of Theorem 1. Let $U(n,R)$ 
and $U^-(n,R)$ be the groups of upper unitriangular 
and lower unitriangular matrices, respectively. These are unipotent 
radicals of the standard Borel subgroup, and its opposite Borel 
subgroup. Further, set
$$ U(n,I)=U(n,R)\cap \GL(n,R,I),\quad 
U^-(n,I)=U^-(n,R)\cap \GL(n,R,I). $$
\noindent
In \cite{NV18} we considered another birelative group
$$ \EE(n,A,B)=\big\langle U(n,A), U^-(n,B)\big\rangle, $$
\noindent 
and established that for {\it commutative\/} rings this group
contains $\big[E(n,A),E(n,B)\big]$, see \cite{NV18}, Theorem 3. 
Since in the case $\EE(n,A,B)$ contains $E(n,R,AB)$ by
\cite{NV19}, Lemma 8, this theorem immediately follows 
from our Theorem 1. It is natural to expect that an analogue
of this result holds over arbitrary associative rings.

\begin{problem}
Let $R$ be any associative ring with $1$, let $n\ge 3$, and let $A,B$ 
be two-sided ideals of $R$. Prove that 
$$ \big[E(n,A),E(n,B)\big]\le\EE(n,A,B). $$
\end{problem}

The difficulty now is exactly to prove that $E(n,R,AB+BA)\le\EE(n,A,B)$.
In the non-commutative case the argument used in the proof
of \cite{NV19}, Lemma 8, only works in the following form.

If $i,j\neq n$, there exists an $h>i,j$ so that one can express 
$t_{ij}(ab)$ as
$$ t_{ij}(ab)=\big[t_{ih}(a),t_{hj}(b)\big]\in
\big[U(n,A), U^-(n,B)\big], $$
\noindent
and conclude that $z_{ij}(ab,c)\in\EE(n,A,B)$.

Similarly, $i,j\neq 1$, there exists an $h<i,j$ so that one can 
express $t_{ij}(ba)$ as
$$ t_{ij}(ba)=\big[t_{ih}(b),t_{hj}(a)\big]\in
\big[U^-(n,B),U(n,A)\big], $$
\noindent
and conclude that $z_{ij}(ba,c)\in\EE(n,A,B)$.

In the case of commutative rings (or, more generally,
when $AB=BA$) this implies that $E(n,R,AB+BA)\in\EE(n,A,B)$,
see \cite{vdK-group, Stepanov_calculus, Stepanov_nonabelian}.
But in the general case this would require some additional 
reasoning.

Let us mention an even more challenging related question.
Namely, let $P$ be a proper
standard parabolic subgroup of $\GL(n,R)$. We can define the 
corresponding subgroup of $\EE(n,A,B)$ as follows:
$$ \EE_P(n,A,B)=\big\langle U_P(A), U_P^-(B)\big\rangle, $$
\noindent
where $U_P(A)$ and $U^-_P(B)$ are the intersections of $U(n,A)$
and $U^-(n,B)$ with the unipotent radicals $U_P$ and $U_P^-$ of
$P$ and its opposite standard parabolic $P^-$, respectively. In the
definition of $\EE(n,A,B)$ itself $P=B(n,R)$ is the standard Borel
subgroup. However, in many cases it is technically much more
expedient to work with the maximal standard parabolics instead,
see, for instance, the works by Alexei Stepanov 
\cite{Stepanov_calculus, Stepanov_nonabelian}.

\begin{problem}
Let $R$ be any associative ring with $1$, let $n\ge 3$, and let $A,B$ 
be two-sided ideals of $R$. Prove that 
$$ \big[E(n,A),E(n,B)\big]\le\EE_P(n,A,B). $$
\end{problem}

The next problem proposes to generalise
\cite{yoga-2}, Theorem 8A, and \cite{Hazrat_Vavilov_Zhang}, 
Theorem 5A, from quasi-finite rings, to arbitrary associative rings. 
In other words, to prove that any multiple commutator of relative or
unrelative elementary subgroups is  equal to some double such
commutator, see  \cite{Hazrat_Zhang_multiple, yoga-2, RNZ5, Hazrat_Vavilov_Zhang, Stepanov_universal} 

Here $A\circ B=AB+BA$ stands for the symmetrised product of two sided ideals $A$ and $B$. In general, the symmetrised product is 
not associative. Thus, when writing something like $A\circ B\circ C$, 
we have to specify the order in which products are formed.
for notation pertaining
to multiple commutators.

Let $G$ be a group and $H_1,\ldots,H_m\le G$ be its
subgroups. There are many ways to form a higher commutator of these
groups, depending on where we put the brackets. Thus, for three
subgroups $F,H,K\le G$ one can form two triple commutators
$[[F,H],K]$ and $[F,[H,K]]$. Usually, we write $[H_1,H_2,\ldots,H_m]$ for the {\it left-normed\/} commutator, defined inductively by
$$ [H_1,\ldots,H_{m-1},H_m]=[[H_1,\ldots,H_{m-1}],H_m]. $$
\noindent
To stress that here we consider {\it any\/} commutator of these subgroups, with an arbitrary placement of brackets, we write $[\![H_1,H_2,\ldots,H_m]\!]$. Thus, for instance, $[\![F,H,K]\!]$ 
refers to any of the two arrangements above.
\par
Actually, a specific arrangment of brackets usually does not play
major role in our results -- apart from one important attribute. 
Namely, what will matter a lot is the position of the outermost 
pairs of inner brackets. Namely, every higher commutator subgroup
$[\![H_1,H_2,\ldots,H_m]\!]$ can be uniquely written as
$$ [\![H_1,H_2,\ldots,H_m]\!]=
[[\![H_1,\ldots,H_h]\!],[\![H_{h+1},\ldots,H_m]\!]], $$
\noindent
for some $h=1,\ldots,m-1$. This $h$ will be called the cut point
of our multiple commutator. 

\begin{problem}
Let $R$ be any associative ring with $1$, let $n\ge 3$, and let
$A_i\unlhd R$, $i=1,\ldots,m$,  be two-sided ideals of $R$. 
Consider an arbitrary arrangment of brackets\/ $[\![\ldots]\!]$
with the cut point\/ $h$. Then one has
$$
[\![E(n,I_1),E(n,I_2),\ldots,E(n,I_m)]\!]=
[E(n,I_1\circ\ldots\circ I_h),E(n,I_{h+1}\circ\ldots\circ I_m)],
$$
\noindent
where the bracketing of symmetrised products on the right hand side coincides with the bracketing of the commutators on the left hand side.
\end{problem}

Observe that Theorem~A and its analogues were used by 
Alexei Stepanov in his remarkable results on bounded 
width of commutators with respect to elementary generators, 
see \cite{Stepanov_universal}, and our survey \cite{Porto-Cesareo}.
Now, it would be natural to refer in these results to our new 
reduced set of generators from Theorem 1. 

Analogues of our Theorems 1 and 2 hold for Bak's unitary groups
over {\it arbitrary\/} form rings. In particular, this generalises 
\cite{RNZ5}, Theorem 9 and \cite{Hazrat_Vavilov_Zhang}, Theorem 3B.
Also, it solves \cite{Hazrat_Vavilov_Zhang}, Problem 1 for the 
unitary case. These results are now incorporated in our unitary paper
\cite{NZ2}. A full analogue of Theorem~1 for Chevalley groups
is much more difficult even in the commutative case, and will be published in \cite{NZ4}.

The authors thank Roozbeh Hazrat and Alexei Stepanov for ongoing 
discussion of this circle of ideas, and long-standing cooperation 
over the last decades. Also, we are grateful to Pavel Gvozdevsky 
and Sergei Sinchuk for their extremely pertinent questions during 
the seminar talk, where the first author was reporting \cite{NV19}.
Last, but not least, the first author thanks Nikolai Vasiliev 
for his insistence and friendly encouragement.


\end{document}